\documentclass[10pt]{article}
\usepackage{amsmath, amsthm}\usepackage{enumerate}\usepackage{amssymb}\usepackage{bbold}\usepackage{color}
\usepackage{bbm}\usepackage{dsfont}\usepackage{color}\usepackage{xcolor}
\newtheorem{theorem}{Theorem}[section]
\newtheorem{proposition}[theorem]{Proposition}

\newtheorem{question}[theorem]{Question}
\newtheorem{example}[theorem]{Example}

\newtheorem{remark}[theorem]{Remark}

\newtheorem{definition}[theorem]{Definition}

\DeclareMathOperator{\convc}{\xrightarrow[]{\mathbb{c}}}
\DeclareMathOperator{\cc}{\xrightarrow[]{\mathbb{c}}}

\DeclareMathOperator{\co}{\xrightarrow[]{o}}

\DeclareMathOperator{\rc}{\xrightarrow[]{\mathbb{rc}}}
\DeclareMathOperator{\convsc}{\xrightarrow[]{\mathbb{sc}}}

\begin{document}

\title{Rough convergence on Riesz spaces}
\maketitle\author{\centering{{Abdullah Ayd\i n$^{1,*}$, Mehmet Küçükaslan$^{2}$, Mokhwetha Mabula$^{3}$\\ \small $^1$Department of Mathematics, Mu\c{s} Alparslan University, Mu\c{s}, 49250, Türkiye, a.aydin@alparslan.edu.tr\\ \small $^2$ Department of Mathematics, Mersin University, Mersin, Türkiye, mkkaslan@gmail.com\\ \small $^3$ Department of Mathematics and Applied Mathematics, University of Pretoria, Pretoria, South Africa, mabulamd@gmail.com\\ $*$Corresponding Author}

\abstract
{This paper extends the theory of rough convergence from normed linear spaces to the more abstract setting of Riesz spaces. We introduce and systematically develop the concept of rough $\mathbb{c}$-convergence ($rc$-convergence) for nets. A net $(x_\alpha)_{\alpha\in A}$ in a Riesz space $E$ is said to be rough $\mathbb{c}$-convergent to $x\in E$ if there exists a net $(y_\alpha)_{\alpha\in A}$ in $E$ with $y_\alpha \convc \theta$ for a given background convergence $\mathbb{c}$, such that $|x_\alpha-x| \leq y_\alpha + \mathbb{r}$ holds for all $\alpha\in A$, where $\mathbb{r}$ is a fixed positive vector in $E$ representing the roughness degree. The study first establishes that this new construction satisfies the axioms of a formal convergence structure. Key properties of $\mathbb{rc}$-convergence are then investigated, including its relationship with linearity and the continuity of lattice operations. Since the limit of an $\mathbb{rc}$-convergent net is not necessarily unique, the paper dedicates significant analysis to the set of rough $\mathbb{c}$-limit points. Furthermore, a crucial connection is established between the order boundedness of a net and the non-emptiness of its set of $\mathbb{rc}$-limit points. This work provides a foundational framework for further exploration of convergence in Riesz spaces.}\\
\vspace{2mm}

{\bf{Keywords:} \rm Riesz space, rough convergence, $\mathbb{rc}$-convergence}
\vspace{2mm}

{\bf AMS Mathematics Subject Classification:} {\normalsize 46A40, 46B42, 40A05}

%%%%%%%%%%%%%%%%%%%%%%%%%%%%%%%%%%%%%%%%%%%%%%%%%%%%%%%%%%%
\section{Introduction}\label{Sec:1}
Convergence is one of the fundamental tool in mathematical analysis, and it plays a crucial role in the study of various properties related to sequences and nets, and their relation to other topological properties. It can be classified in two ways, a topological convergence and non-topological convergence. It can be classified in two ways, a topological convergence and non-topological convergence. These types of convergences are equally important in mathematical analysis. It is well-known that order convergence in an infinite dimensional Riesz spaces is non-topological \cite{Gor}, it however plays an important role in defining operator like order continuous operators in vector lattices, without necessitating any elaborate topological structure. Gorokhova show that order convergence on an infinite dimensional Riesz space is not topological \cite{Gor}. In finite dimensional Riesz spaces order convergence is equivalent to norm convergence and hence topological \cite{Andrew}. On the other hand, Ordman established that almost everywhere convergence cannot be characterized by a topology \cite{Ord66}. To explore different notions of convergence, researchers have considered the structure of convergence. Investigations were carried out to examine non-topological convergences by Hyland \cite{Hyland}. Also, Beattie and Butzmann presented useful results regarding the theory of convergence structures \cite{BB}. Ayd\i n et al. \cite{AEG} focused on full lattice convergence on Riesz spaces and introduced multiplicative order convergence on Riesz algebras \cite{AAydn2,AAydn3}, and O’Brien et al. \cite{Tr} presented the theory of net convergence structures, which is equivalent to filter convergence theory. 

Phu, expanding on progress, introduced a new concept of convergence in normed spaces, named rough convergence \cite{Phu2001}. This concept specifically was applied to sequences in finite-dimensional normed linear spaces. Thereafter, he applied this theory of rough convergence to introduce and study the rough continuity of linear operators \cite{Phu2002}, and provided a characterization of rough convergence in infinite-dimensional normed linear spaces \cite{Phu2003}. The application of rough convergence extends to weighted statistical convergence on locally solid Riesz spaces \cite{GB}, and it has been compared with different types of convergence in various spaces \cite{Aytar,AD}. The purpose of this paper is to introduce a comprehensive theory of rough convergence on Riesz spaces and give basic result about rough $\mathbb{c}$-limit points. While various forms of convergence exist, such as order and relatively uniform convergence on Riesz spaces, in order to maintain a broad perspective, we adopt the concept of $c$-convergence as defined in \cite{AEG}. Therefore, this study serves as a foundation for future specific definitions of rough convergence on Riesz spaces.

\begin{definition}\cite{Phu2001}
	For a given non-negative real number $r$, a sequence $(x_n)$ within a normed space $(X, \rVert\cdot\lVert)$ is described as {\em rough convergent} (or alternatively, {\em $r$-convergent}) to an element $x \in X$ if, for any arbitrary $\varepsilon > 0$, an index $n_\varepsilon \in \mathbb{N}$ can be found such that the inequality $\rVert x_n - x\lVert < r + \varepsilon$ is satisfied for all $n > n_\varepsilon$.
\end{definition}

This is equivalent to the condition $\limsup \rVert x_n - x\lVert\leq r$, where the constant $r$ signifies the degree of convergence for the sequence $(x_n)$. Clearly, any sequence that converges in the standard norm sense is also roughly convergent to the same limit for any $r\geq 0$. The converse, however, is not generally true. Thus, rough convergence may be regarded as a more relaxed criterion for convergence when compared to norm convergence.

A real vector space $E$ furnished with a partial order relation "$\leq$" (which is reflexive, antisymmetric, and transitive) is known as an {\em ordered vector space}, provided that the relation is compatible with the vector space operations. This compatibility requires that for any $x, y \in E$ with $x\leq y$, the relations $x+z\leq y+z$ and $\alpha x\leq\alpha y$ hold for every $z\in E$ and any non-negative scalar $\alpha \in \mathbb{R}_+$. A Riesz space is termed {\em Dedekind complete} if every non-empty subset that is bounded above possesses a supremum (which is equivalent to every non-empty subset bounded below having an infimum); see \cite{AB,ABPO,AAydn1,EG,Hu,LZ,Za} for further details.

The paper is organized as follows. Section $2$ is dedicated to a review of essential concepts related to the convergence of nets in the context of Riesz spaces and normed spaces. This section also introduces the notion of rough $\mathbb{c}$-convergence within Riesz spaces, formulated with respect to a general $\mathbb{c}$-convergence, thereby drawing an analogy to the theory of rough norm convergence. We present Example \ref{piece wise polynomial} to show that a roughly $\mathbb{c}$-convergent net is not necessarily fullatticification convergent. Additionally, Example \ref{c and rc coincide} is provided to illustrate scenarios where $\mathbb{c}$-convergence and rough $\mathbb{c}$-convergence are identical.

In Section $3$, we concentrate on building the foundational theory for rough $\mathbb{c}$-convergence. The circumstances under which rough $\mathbb{c}$-convergence behaves as a sequential convergence are identified in Proposition \ref{sequential topological convergence}, and Theorem \ref{rc is a convergence} validates that rough $\mathbb{c}$-convergence is a proper form of convergence in Riesz spaces. Proposition \ref{additive and unique} details the conditions that guarantee the uniqueness of the rough $\mathbb{c}$-limit. To demonstrate that rough $\mathbb{c}$-convergence lacks linearity, we use Example \ref{rc is not linear}. Proposition \ref*{LO are $p$-continuous} shows that lattice operations exhibit rough $\mathbb{c}$-continuity, and Theorem \ref{double nets} reveals several lattice-related properties of rough $\mathbb{c}$-convergence.

In Section $4$, we explore an extensive examination of the set of limit points associated with a rough $\mathbb{c}$-convergent net, denoted as $\mathcal{L}^\mathbb{r}_{x_\alpha}$. Proposition \ref{the inclusion about r} demonstrates that the inclusion $\mathcal{L}^\mathbb{r_1}_{x_\alpha}\subseteq \mathcal{L}^\mathbb{r_2}_{x_\alpha}$ holds true whenever $\mathbb{r}_{1}\leq\mathbb{r}_{2}$. Theorem \ref{order 2r} illustrates that $\sup\{|x-y|:x,y\in \mathcal{L}^\mathbb{r}_{x_\alpha}\}\leq2\mathbb{r}$, while Theorem \ref{suset inclusion} establishes a relationship between the limit points of a rough $\mathbb{c}$-convergent net and its subnet. Theorem \ref*{two properties} demonstrates the connection between the limit points of a rough $\mathbb{c}$-convergence and order boundedness. Proposition \ref{closedness} indicates that $(x_\alpha)_{\alpha\in A}$ is $\mathbb{c}$-closed. Finally, Theorem \ref{order convergence} proves the existence of a relationship between order convergence and rough $\mathbb{c}$-convergence.
%%%%%%%%%%%%%%%%%%%%%%%%%%%%%%%%%%%%%%%%%%%%%%%%%%%%%%%%%%%%%%%%%%%%%%%%%%%%%%%%%%%%%%%%

%%%%%%%%%%%%%%%%%%%%%%%%%%%%%%%%%%%%%%%%%%%%%%%%%%%%%%%%%%%%%%%%%%%%%%%%%%%%%%%%%%%%%%%%
\section{Preliminaries}\label{Sec:2}
First of all, the idea of rough convergence in relation to nets in Riesz spaces will be introduced. The definition of a net is not universally clear in the literature, as different definitions are considered. We provide a general definition of a net. A binary relation "$\leq$" on a set $A$ is referred to as a {\em preorder} if it satisfies reflexivity and transitivity. A non-empty set $A$, equipped with a preorder binary relation "$\leq$", is said to be a {\em directed upwards} (or simply a {\em directed set}) if for every pair of elements $x, y \in A$, there exists an element $z \in A$ such that $x \leq z$ and $y \leq z$. Let us recall the following notions from Definition 3.3.14 \cite{Rud}:
\begin{definition} \
	\begin{enumerate}[(1)]
		\item A function whose domain is a directed set is called a {\em net}, and denoted by $(x_\alpha)_{\alpha \in A}$, with its directed domain set $A$.
		\item A net $(y_\beta)_{\beta\in B}$ is considered a {\em subnet} of the net $(x_\alpha)_{\alpha \in A}$ in a nonempty set $X$ if there exists a function $\phi: B \rightarrow A$ such that $y_\beta=x_{\phi(\beta)}$ for all $\beta \in B$, and for each $\alpha \in A$ there exists $\beta_\alpha \in B$ such that $\alpha\leq \phi(\beta)$ for all $\beta\geq \beta_\alpha$. It is abbreviated as $(x_{\alpha_\beta})_{\beta\in B}$ in general.
	\end{enumerate}
\end{definition}

It should be noted that a sequence can be considered as a net indexed by natural numbers. Convergence of nets is a fundamental concept that appears in various fields and it has significant importance due to its ability to describe the behavior and properties of nets, sequences, series, functions, and other mathematical structures. However, there are some different approaches to the convergence of nets. In order to understand the convergence of nets, we consider the following structure. We take the following definition from  \cite[Def.1.1]{AEG}.
\begin{definition}\label{convergence}
	Let $X$ denote a set and $\mathbb{c}$ is a class of pairs $(C,c)$, where $C$ represents a net in $X$ and $c\in X$. Then, $\mathbb{c}$ is said to be a {\em convergence on $X$}, provided that:
	\begin{enumerate}
		\item[$(1)$] For any constant net $C\equiv c$ in $X$, $(C,c)\in\mathbb{c}$.
		\item[$(2)$] If $(C,c)\in\mathbb{c}$ and $A$ is a subnet of $C$, then $(A,c)\in\mathbb{c}$.
		\item[$(3)$] If a tail of a net $(C_\alpha)_{\alpha\in A}$ converges to $c$ (i.e., $(C_\alpha)_{\alpha\in\{\xi\in A: \xi\ge\alpha_0\}}$ converges to $c$ for an index $\alpha_0\in A$), then $(C_\alpha)_{\alpha\in A}$ converges to $c$.
	\end{enumerate}
\end{definition}

From now on, we denote the set $\mathbb{c}$ as $\{((x_\alpha)_{\alpha\in A},x):(x_\alpha)_{\alpha\in A} \ \text{\textit{is a net in}} \ X \ \text{\textit{and}} \ x\in X\}$. Also, note that we use the notation $x_\alpha\convc x$ as a shorthand for the statement $((x_\alpha)_{\alpha\in A},x)\in\mathbb{c}$. We provide a brief overview of basic concepts regarding the convergence of nets in Riesz spaces. For a more detailed exposition, interested readers are referred to the papers \cite{AEG,Tr}. In the context of this study, $\mathbb{c}$ and $\mathbb{t}$ denote convergences having properties fiven in Definition \ref{convergence} on sets $X$ and $Y$, respectively.
\begin{enumerate}
	\item[-] If the convergence $\mathbb{c}$ on the set $X$ is induced by a topology on
	$X$, then it is called {\em topological}.
	\item[-] A subset $K$ of $X$ is called {\em $\mathbb{c}$-closed} if every net $(x_\alpha)_{\alpha\in A}$ in $K$ satisfying $x_\alpha\convc x$ implies $x\in K$.
	\item[-] If $X$ is a real vector space, then $\mathbb{c}$ is called {\em linear} whenever the addition from $X\times X$ to $X$ and the scalar multiplication from $\mathbb{R}\times X$ to $X$ are continuous.
	\item[-] $\mathbb{c}$ is called {\em additive} whenever $x_\alpha\convc x$ and $y_\alpha\convc x$ implies that the net $(x_\alpha\pm y_\beta)_{(\alpha,\beta)\in A\times B}$ $\mathbb{c}$-convergent to $x\pm y$.
	\item[-] $\mathbb{c}$ is called {\em full convergence} if $X$ is an ordered vector space, $(x_\alpha)_{\alpha\in A}\convc \theta$ and $\theta\leq y_\alpha\leq x_\alpha$ hold for all $\alpha\in A$ implies $y_\alpha\convc \theta$.
	\item[-] $\mathbb{c}$ is called {\em lattice convergence} if $X$ is a Riesz space and $x_\alpha\convc x$ implies $|x_\alpha|\convc |x|$.
	\item[-] A net $(x_{\alpha})_{\alpha\in A}$ in $S$ is said to be {\em $\mathbb{sc}$-convergent to $x\in X$} whenever for any subnet $(x_{\alpha_\beta})_{\beta\in B}$ of $(x_{\alpha})_{\alpha\in A}$ there exists a sequence $\beta_n$ in $B$ such that $(x_{\alpha_{\beta_n}})_{n\in\mathbb{N}}\convc x$. Moreover, if $\mathbb{sc}=\mathbb{c}$ holds, then $\mathbb{c}$ is called {\em sequential}.
	\item[-] Let $\mathbb{c}$ be a convergence on a Riesz space $E$. The {\em fullification} $f\mathbb{c}$ of $\mathbb{c}$ is defined by $((y_\alpha)_{\alpha\in A}, y)\in f\mathbb{c}$ if there exists a net $((x_\alpha)_{\alpha\in A}, \theta)\in\mathbb{c}$ such that for all $\alpha\in A$ we have $|y_\alpha-y|\le x_\alpha$.
\end{enumerate}

Note that order convergence is very important notion on Riesz space because many topological properties can be defined in Riesz spaces without topology thanks to order convergence, and it provides the cornerstone for many studies. But, there exist various definitions of order convergence, an so we provide one of the most general definition of order convergence below. For a comprehensive discussion on various types of order convergence on Riesz spaces, we recommend that readers pay attention especially to \cite{AS,AB}. Order convergence, often abbreviated as $o$-convergence, stands as one of the most crucial convergence modes within a Riesz space.

\begin{definition}\label{monoton and ordr conv}
	Let $E$ be a Riesz space and $(x_\alpha)_{\alpha\in A}$ be a net in $E$.
	\begin{enumerate}
		\item If $(x_\alpha)_{\alpha\in A}$ satisfying $x_\alpha \leq x_\beta$ whenever $\beta\leq \alpha$ in a partial ordered set is called {\em decreasing}. In this case, it is denoted by $x_\alpha\downarrow$. Moreover, $x_\alpha\downarrow x$ means that $x_\alpha\downarrow$ and $\inf x_\alpha=x$ and we say that $(x_\alpha)$ converges monotonically decreasing to $x$.
		\item $(x_\alpha)_{\alpha\in A}$ called as {\em order convergent} to $x\in E$ (abbreviated as $x_\alpha\co x$) if there exists a net $y_\alpha\downarrow \theta$ with the same index set in $E$ such that $|x_\alpha-x|\le y_\alpha$ holds for all $\alpha\in A$.
	\end{enumerate}
\end{definition}

Thorough the paper, unless otherwise stated, the symbols $\mathbb{r}$ and $\mathbb{s}$ are considered as positive vectors (an element $x$ in a Riesz space is said to be {\em positive} if $x\geq\theta$, i.e., it is greater than or equal to zero) in Riesz spaces.
\begin{definition}
	Let $\mathbb{c}$ be a convergence in a Riesz space $E$. A net $(x_\alpha)_{\alpha\in A}$ is said to be {\em rough $\mathbb{c}$-convergent} (or {\em $\mathbb{rc}$-convergent}) to $x\in E$ if there exists a net $y_\alpha\cc \theta$ in $E$ such that
	$$
	|x_\alpha-x|\leq y_\alpha+\mathbb{r}
	$$
	holds for all $\alpha\in A$. This is indicated as $x_\alpha\rc x$.
\end{definition}

Let us note that our primary focus lies on the case of $\mathbb{r}\neq\theta$ because the case of $\mathbb{r}=\theta$ coincide with the classical convergence considered in \cite{AEG}. This particular case holds significant motivation. Consider nets $(z_\alpha)_{\alpha\in A}$ such that $z_\alpha\cc x$ and $|z_\alpha-x_\alpha|\leq \mathbb{r}$ holds for all $\alpha$. The following inequality 
$$
|x_\alpha - x| \leq |x_\alpha- z_\alpha|+|z_\alpha-x|\leq \mathbb{r}+|z_\alpha-x|,
$$
implies that $x_\alpha\rc x$ whenever $\mathbb{c}$ is a lattice convergence. This is not true for classical convergence. The concept of $\mathbb{rc}$-convergence is characterized by the introduction of a roughness degree denoted by $\mathbb{r}$. In Dedekind complete Riesz spaces, the $\mathbb{rc}$-convergence of a net $(x_\alpha)_{\alpha\in A}$ can be equivalently expressed as 
$$
\limsup|x_\alpha-x|\leq \mathbb{r}.
$$

Note that a net $(x_{\alpha})_{\alpha\in A}$ in $E$ is {\em $\mathbb{src}$-convergent to $x\in E$} whenever for any subnet $(x_{\alpha_\beta})_{\beta\in B}$ of $(x_{\alpha})_{\alpha\in A}$ there exists a sequence $\beta_n$ in $B$ such that $(x_{\alpha_{\beta_n}})_{n\in\mathbb{N}}\rc x$. Moreover, if $\mathbb{src}=\mathbb{rc}$ holds, then $\mathbb{rc}$-convergence is sequential.

It is clear that every fullification convergent net is $\mathbb{rc}$-convergent in Riesz spaces for all positive elements $\mathbb{r}$. On the other hand, fullification and $\mathbb{rc}$-convergence coincide in the case $\mathbb{r}=\theta$. However, the next example shows that this equivalence need not be true in general.
\begin{example}\cite[Ex.5.1(i)]{Hu} \label{piece wise polynomial} 
	Let a Riesz space $E$ consisting of all real valued continuous functions defined on $[0,1]$ which are piecewise polynomial with finitely many pieces. Take the following norm on $E$ (see for details \cite[Exam.2]{AEG}):
	$$
	\|f\|:=\|f\|_\infty+\sup\bigg\{\bigg|\frac{df}{dt}(t)\bigg|: t\in [0,1] \ \text{and} \ \frac{df}{dt}(t)\ \text{exists} \ \bigg\}
	$$ 
	for all $f\in E$. 
	
	Let $\mathbb{c}$ be the norm $\lVert\cdot\rVert$-convergence on $E$. Consider the sequence $(f_n)$ of functions, where $f_n:=|f-\frac{1}{n}|$ for all $n\in\mathbb{N}$ and a function
	\begin{equation*}
		f(t):= 
		\left\{
		\begin{array}{ll}
			0, & \text{if }\  0\leq t< 1/2 \\
			2t-1, & \text{if }\  1/2\leq t\leq 1
		\end{array}
		\right.
	\end{equation*}
	Thus, it can be easily seen that $\lVert f_n-f\rVert=\lVert |f-\frac{1}{n}|-|f|\rVert=4+\frac{1}{n}\nrightarrow 0$ holds. Therefore, $(f_n)$ is not fullatticification convergent. However, for the constant function $\mathbb{r}=4$, we have $f_n\rc f$.
\end{example}

One can notice that any net which converges in the linear $\mathbb{c}$-sense is also $\mathbb{rc}$-convergent to the same point in Riesz spaces for all positive elements $\mathbb{r}$. The next example illustrates that $\mathbb{rc}$-convergence and $\mathbb{c}$-convergence can be equivalent for a special case, even if $\mathbb{r} > 0$.
\begin{example}\label{c and rc coincide}
	Consider the convergence $\mathbb{c}$ defined as the weak convergence on the Banach lattice $E=L_\infty[0,1]$. Let $(r_n)$ be the sequence of Rademacher's functions, which are real-valued functions defined on $[0,1]$ such that $r_n(t)=\text{sgn}(\sin(2^n\pi t))$ for each $n\in\mathbb{N}$ and $t\in[0,1]$. It is evident that $r_n\cc 0$. Furthermore, we have $r_n\rc 0$ for the constant function $\mathbb{r}=1$ in $E$ due to $|r_n|\cc 1$.
\end{example}

\begin{remark}\label{full conv}\
	\begin{enumerate}[(i)]
		\item $\mathbb{rc}$-convergence is a full convergence for each $\mathbb{c}$-convergence on Riesz spaces. Indeed, assume that $(x_\alpha)_{\alpha\in A}\rc \theta$ and $\theta\le y_\alpha\le x_\alpha$ for all $\alpha\in A$. Then, there is a net $(z_\alpha)_{\alpha\in A}\cc \theta$ such that $|y_\alpha-\theta|=y_\alpha\le x_\alpha=|x_\alpha-\theta|\le z_\alpha +\mathbb{r}$ for all $\alpha\in A$. Hence, $(y_\alpha)_{\alpha\in A}\rc \theta$, and so $\mathbb{rc}$-convergence has the fullness property.
		
		\item $\mathbb{rc}$-convergence is a lattice convergence for each $\mathbb{c}$-convergence on Riesz spaces. Indeed, suppose that $(x_\alpha)_{\alpha\in A}\rc x$. Then, we have a net $(z_\alpha)_{\alpha\in A}\cc \theta$ such that $|x_\alpha-x|\le z_\alpha+\mathbb{r}$ for all $\alpha\in A$. It follows from the inequality $||x_\alpha|-|x||\le|x_\alpha-x|$ that we obtain $|x_\alpha|\rc|x|$. Therefore, $\mathbb{rc}$-convergence is a lattice convergence.
	\end{enumerate}
\end{remark}
%%%%%%%%%%%%%%%%%%%%%%%%%%%%%%%%%%%%%%%%%%%%%%%%%%%%%%%%%%%%%%%%%%%%
%%%%%%%%%%%%%%%%%%%%%%%%%%%%%%%%%%%%%%%%%%%%%%%%%%%%%%%%%%%%%%%%%%%%
\section{The theory of $\mathbb{rc}$-convergence}
In this section, we show that rough convergence provides some basic results in Riesz spaces.  Most of results are direct analogies of well-known facts of the theory of Riesz spaces. We include them for convenience of the reader. We denote the closure set 
$$
\text{cl}_{\mathbb{rc}}(F):=\{x\in E:\exists (x_\alpha)_{\alpha\in A} \ \text{in}\ F,\ x_\alpha\rc x\}
$$
for a Riesz subspace $F$ of a Riesz space $E$.
\begin{proposition}\label{sequential topological convergence}
	Let $E$ be a topological Riesz space. Then, the following statements are equivalent:
	\begin{enumerate}
		\item[(i)] The $\mathbb{rc}$-convergence is sequential$;$
		\item[(ii)] For any subspace $F$ and $x\in\text{cl}_{\mathbb{rc}}(F)$, there exists a sequence $(x_n)$ in $F$ such that $x_n\rc x$.
	\end{enumerate}
\end{proposition}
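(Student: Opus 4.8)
The plan is to prove the two implications separately, treating $(i)\Rightarrow(ii)$ as the routine direction and concentrating the real work on $(ii)\Rightarrow(i)$. The guiding observation is that, among the two inclusions comprising $\mathbb{src}=\mathbb{rc}$, the extraction-type inclusion $\mathbb{rc}\subseteq\mathbb{src}$ (passing from a convergent net to convergent subsequences of its subnets) is the one carrying genuine content, while $\mathbb{src}\subseteq\mathbb{rc}$ should be recoverable from the topological structure of $E$.

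For $(i)\Rightarrow(ii)$, I would start from an arbitrary subspace $F$ and a point $x\in\text{cl}_{\mathbb{rc}}(F)$. By the definition of the $\mathbb{rc}$-closure there is a net $(x_\alpha)_{\alpha\in A}$ lying in $F$ with $x_\alpha\rc x$. Since $\mathbb{rc}$-convergence is assumed sequential, i.e. $\mathbb{src}=\mathbb{rc}$, this net is also $\mathbb{src}$-convergent to $x$. Applying the definition of $\mathbb{src}$-convergence to the net regarded as a subnet of itself (through $\phi=\mathrm{id}_A$, which satisfies the subnet condition) produces a sequence $(\alpha_n)$ in $A$ with $(x_{\alpha_n})_{n\in\mathbb{N}}\rc x$. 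As every term $x_{\alpha_n}$ belongs to $F$, this is exactly the sequence required by $(ii)$.

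For $(ii)\Rightarrow(i)$, I must establish both inclusions. The inclusion $\mathbb{src}\subseteq\mathbb{rc}$ I expect to be the routine half: given an $\mathbb{src}$-convergent net, the topological structure of $E$ furnishes a subnet criterion for $\mathbb{rc}$-convergence (a net $\mathbb{rc}$-converges to $x$ provided every subnet admits a further subnet $\mathbb{rc}$-converging to $x$), and the sequences extracted in the definition of $\mathbb{src}$ supply the required convergent pieces; this is where the hypothesis that $E$ is a topological Riesz space enters. The substantive inclusion is $\mathbb{rc}\subseteq\mathbb{src}$: starting from a net $(x_\alpha)_{\alpha\in A}\rc x$ and an arbitrary subnet $(x_{\alpha_\beta})_{\beta\in B}$, which again $\mathbb{rc}$-converges to $x$ by the convergence axioms from Theorem \ref{rc is a convergence}, I would take $F$ to be the Riesz subspace generated by the range $\{x_{\alpha_\beta}:\beta\in B\}$. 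Then the subnet witnesses $x\in\text{cl}_{\mathbb{rc}}(F)$, so $(ii)$ delivers a sequence $(w_n)$ in $F$ with $w_n\rc x$; reading each $w_n$ back as a value $x_{\alpha_{\beta_n}}$ of the net yields the sequence $(\beta_n)$ demanded by the definition of $\mathbb{src}$-convergence.

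The main obstacle I anticipate is precisely this last realization step: $(ii)$ only guarantees a sequence lying in the generated subspace $F$, whereas $\mathbb{src}$-convergence requires an honest subsequence $(x_{\alpha_{\beta_n}})_{n}$ of the given net. Bridging this gap forces one to apply $(ii)$ to the range of the subnet itself rather than to its full linear span, and to check that $\text{cl}_{\mathbb{rc}}$ behaves correctly on that set. A secondary technical point is that the extracted sequence $(\beta_n)$ need not be cofinal in $B$, so in the inclusion $\mathbb{src}\subseteq\mathbb{rc}$ one must verify that a convergent extracted sequence genuinely feeds the subnet criterion. These are the places where the topological hypothesis and the fact that $\mathbb{rc}$ is a genuine convergence will have to be invoked with care.
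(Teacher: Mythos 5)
Your skeleton coincides with the paper's: the direction $(i)\Rightarrow(ii)$ is proved exactly as you describe (view the witnessing net as a subnet of itself and extract a sequence), and $(ii)\Rightarrow(i)$ is split into the two inclusions $\mathbb{rc}\subseteq\mathbb{src}$ and $\mathbb{src}\subseteq\mathbb{rc}$. The problem is that at the two points you yourself single out as obstacles your plan stops, and these are precisely where the content lies. For $\mathbb{rc}\subseteq\mathbb{src}$, taking $F$ to be the Riesz subspace generated by $\{x_{\alpha_\beta}:\beta\in B\}$ fails for the reason you give: the sequence $(w_n)$ supplied by $(ii)$ consists of elements of $F$, i.e.\ lattice-linear combinations of net values, and there is no way to ``read each $w_n$ back'' as some $x_{\alpha_{\beta_n}}$. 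The paper avoids the span altogether and applies $(ii)$ directly to the set of subnet values $\{x_{\alpha_\beta}:\beta\in B\}$, so that the resulting sequence is by construction of the form $(x_{\alpha_{\beta_n}})$ and witnesses $\mathbb{src}$-convergence. (Strictly speaking that set is not a subspace, so even the paper's step requires reading $(ii)$ for arbitrary subsets; but that is the commitment that makes the realization step work, and your proposal never makes it.)

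For $\mathbb{src}\subseteq\mathbb{rc}$ your route through the criterion ``every subnet has a further $\mathbb{rc}$-convergent subnet'' founders on exactly the cofinality point you flag: the extracted sequence $(x_{\alpha_{\beta_n}})_{n\in\mathbb{N}}$ need not be cofinal in $B$, hence need not be a subnet at all, so it cannot feed that criterion, and nothing in your sketch repairs this. The paper's argument is different and sidesteps the issue: assuming $(x_\alpha)_{\alpha\in A}$ does not $\mathbb{rc}$-converge to $x$, it produces a neighborhood $U$ of $x$ and a subnet all of whose terms lie outside $U$; $\mathbb{src}$-convergence then yields a sequence of these terms which $\mathbb{rc}$-converges to $x$ and therefore eventually enters $U$, a contradiction. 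Here the extracted sequence is only required to consist of values of the subnet, not to be a subnet of it, so cofinality never enters. In short, both halves of your $(ii)\Rightarrow(i)$ end with an acknowledged but unresolved gap, and the paper's two devices --- applying $(ii)$ to the set of subnet values rather than to a generated subspace, and a neighborhood-based contradiction rather than a subnet criterion --- are exactly what your write-up is missing.
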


\begin{proof}
	($i)\implies(ii)$: Let $x$ be an element in the $\mathbb{rc}$-closure of $F$, denoted $\text{cl}_{\mathbb{rc}}(F)$. By definition, this implies there is a net $(x_\alpha)$ composed of elements from $F$ that $\mathbb{rc}$-converges to $x$. Since $\mathbb{rc}$-convergence is sequential, we can extract a sequence $(x_{\alpha_n})$ from the net $(x_\alpha)$ that also $\mathbb{rc}$-converges to $x$. This provides the required sequence.
	
	($ii)\implies(i)$: First, we will establish that $\mathbb{src}$-convergence and $\mathbb{rc}$-convergence are equivalent. Let a net $(x_\alpha)$ $\mathbb{rc}$-converge to $x$, and consider any subnet $(x_{\alpha_\beta})$. From the definition of $\mathbb{rc}$-convergence, it follows directly that this subnet also $\mathbb{rc}$-converges to $x$. Consequently, $x$ belongs to the $\mathbb{rc}$-closure of the set of the subnet's elements, $\{x_{\alpha_\beta}:\beta\in B\}$. By applying assumption $(ii)$, we can find a sequence $(x_{\alpha_{\beta_n}})$ within this subnet that $\mathbb{rc}$-converges to $x$. This confirms that the original net $(x_\alpha)$ is $\mathbb{src}$-convergent to $x$.
	
	For the reverse direction, assume the net $(x_\alpha)$ is $\mathbb{src}$-convergent to $x$. We will argue by contradiction. Suppose that the net does not $\mathbb{rc}$-converge to $x$, i.e., $(x_{\alpha})_{\alpha\in A} \stackrel{\mathbb{rc}}{\nrightarrow} x$. This implies the existence of a neighborhood $U$ of $x$ and a subnet $(x_{\alpha_\beta})$ such that no element of the subnet is in $U$. However, because the original net $(x_\alpha)$ is $\mathbb{src}$-convergent to $x$, this subnet must contain a sequence $(x_{\alpha_{\beta_n}})$ which $\mathbb{rc}$-converges to $x$. This convergence means that for some index $n_0$, all terms $x_{\alpha_{\beta_n}}$ with $n \ge n_0$ must lie within $U$. This creates a contradiction, as we established that no element of the subnet $(x_{\alpha_\beta})$ could be in $U$. Therefore, our initial supposition is false, and it must be that $x_\alpha \rc x$.
\end{proof}

The following example shows that not every sequence has to be $\mathbb{rc}$-convergent and $\mathbb{rc}$-convergence does not necessarily imply $\mathbb{c}$-convergence in general.
\begin{example}
	Consider the Euclidean space $E:=\mathbb{R}^2$ with the lexicographic order (i.e., $(x_1,y_1)\leq(x_2,y_2)$ if and only if $x_1<x_2$ or $x_1=x_2$ and $y_1\leq y_2$) equipped with the $\mathbb{c}$-convergence denoted as monotonically decreasing convergence defined in Definition \ref{monoton and ordr conv}$(1)$. 
	\begin{enumerate}
		\item[(a)] Consider the net $(x_\alpha)_{\alpha\in \mathbb{N}}$ in $E$ defined by $x_\alpha:=(-\alpha,\alpha)$ for every $\alpha\in\mathbb{N}$. Then, it can be easily seen that $(x_\alpha)_{\alpha\in A}$ is not $\mathbb{rc}$-convergent.
		
		\item[(b)] Take the net $(x_\alpha)_{\alpha\in \mathbb{N}}$ defined by $x_\alpha:=\left(\frac{1}{\alpha}, \frac{1}{\alpha}\right)$ for each $\alpha\in\mathbb{N}$. We show that $(x_\alpha)_{\alpha\in \mathbb{N}}$ is $\mathbb{rc}$-convergent to $x:=(0,0)$ but not $\mathbb{c}$-convergent to $x$. Indeed, to see the $\mathbb{rc}$-convergence of $(x_\alpha)_{\alpha\in A}$, consider a net $(y_\alpha)_{\alpha\in A}$ defined by $y_\alpha:=\left(0,\frac{2}{\alpha}\right)$ for all $\alpha\in\mathbb{N}$. It is straightforward to see that $(y_\alpha)_{\alpha\in A}\cc x$. On the other hand, for each $\alpha\in \mathbb{N}$, we have
		$$
		|x_\alpha-x|=\left(\frac{1}{\alpha}, \frac{1}{\alpha}\right)\leq \left(0+1,\frac{2}{\alpha}+0\right)= y_\alpha+(1,0)
		$$
		for all $\alpha\in\mathbb{N}$ and the positive element $\mathbb{r}=(1,0)$ in $E$. Hence, we obtain the desired condition of $\mathbb{rc}$-convergent of the net $(x_\alpha)_{\alpha\in A}$, i.e., $x_\alpha\rc x$. However, it is clear that the net $\left(\frac{1}{\alpha}, \frac{1}{\alpha}\right)$ is decreasing but not convergent to $x=(0,0)$. Therefore, $(x_\alpha)_{\alpha\in A}$ is not $\mathbb{c}$-convergent to $x=(0,0)$.
	\end{enumerate}
\end{example}

The next theorem show that $\mathbb{rc}$-convergence satisfies the conditions of convergence.
\begin{theorem}\label{rc is a convergence}
	Let $E$ be Riesz space equipped with a convergence $\mathbb{c}$. Then, $\mathbb{rc}$-convergence is also a convergence in Riesz spaces.
\end{theorem}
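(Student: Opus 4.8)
The plan is to verify directly the three defining axioms of a convergence from Definition~\ref{convergence}, using at each step the fact that the background convergence $\mathbb{c}$ already satisfies those axioms, together with the positivity $\mathbb{r}\geq\theta$ of the roughness vector.

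For the constant-net axiom, I would take $C\equiv c$, so that $|x_\alpha-c|=\theta$ for every $\alpha$. Choosing the constant witness net $y_\alpha\equiv\theta$, which satisfies $y_\alpha\cc\theta$ by axiom~(1) for $\mathbb{c}$, the required inequality $|x_\alpha-c|=\theta\leq\theta+\mathbb{r}=\mathbb{r}$ follows from $\mathbb{r}\geq\theta$. Hence $x_\alpha\rc c$. For the subnet axiom, I would start from a net with $x_\alpha\rc x$ and a witness $y_\alpha\cc\theta$ satisfying $|x_\alpha-x|\leq y_\alpha+\mathbb{r}$ for all $\alpha$. Given a subnet $(x_{\alpha_\beta})_{\beta\in B}$ through $\phi\colon B\to A$, I would pass to the corresponding subnet $(y_{\alpha_\beta})_{\beta\in B}$ of the witness. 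By axiom~(2) for $\mathbb{c}$ we have $y_{\alpha_\beta}\cc\theta$, and substituting $\alpha=\phi(\beta)$ into the original inequality gives $|x_{\alpha_\beta}-x|\leq y_{\alpha_\beta}+\mathbb{r}$ for all $\beta$. Thus $(x_{\alpha_\beta})_{\beta\in B}\rc x$.

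The tail axiom is where the only genuine subtlety arises, since the witness net supplied by the $\mathbb{rc}$-convergence of a tail is a priori defined only on the tail index set. Suppose the tail $(x_\alpha)_{\alpha\geq\alpha_0}$ is $\mathbb{rc}$-convergent to $x$, with witness $(y_\alpha)_{\alpha\geq\alpha_0}\cc\theta$ satisfying $|x_\alpha-x|\leq y_\alpha+\mathbb{r}$ for $\alpha\geq\alpha_0$. I would extend this witness to all of $A$ by defining $\tilde y_\alpha:=y_\alpha$ for $\alpha\geq\alpha_0$ and $\tilde y_\alpha:=|x_\alpha-x|$ otherwise. The inequality $|x_\alpha-x|\leq\tilde y_\alpha+\mathbb{r}$ then holds for every $\alpha\in A$: on the tail it is inherited, while off the tail it reduces to $|x_\alpha-x|\leq|x_\alpha-x|+\mathbb{r}$, which holds because $\mathbb{r}\geq\theta$. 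Finally, the tail of $\tilde y$ at $\alpha_0$ coincides with $(y_\alpha)_{\alpha\geq\alpha_0}$, which $\mathbb{c}$-converges to $\theta$; applying axiom~(3) for $\mathbb{c}$ to the net $\tilde y$ yields $\tilde y_\alpha\cc\theta$. Hence $\tilde y$ is a valid global witness and $(x_\alpha)_{\alpha\in A}\rc x$, completing the verification.

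The main obstacle is precisely this extension of the witness net across the non-tail indices while preserving its $\mathbb{c}$-convergence to $\theta$; the resolution leans essentially on the tail axiom of $\mathbb{c}$ itself, which is what transfers the global convergence of the extended witness from its tail. Apart from this point the argument is routine, each clause reducing the corresponding property of $\mathbb{rc}$-convergence to the same property of $\mathbb{c}$ applied to an associated witness net.
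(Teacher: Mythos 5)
Your proposal is correct and follows essentially the same route as the paper's proof: the constant-net and subnet axioms are reduced to the corresponding axioms for $\mathbb{c}$ applied to a witness net, and for the tail axiom you construct exactly the same extended witness (equal to the tail witness on $\{\alpha\geq\alpha_0\}$ and to $|x_\alpha-x|$ elsewhere) and invoke the tail axiom of $\mathbb{c}$ to conclude. Your explicit choice of the constant net $y_\alpha\equiv\theta$ in the first axiom is a slightly cleaner instantiation of what the paper phrases as ``an arbitrary non-negative net $y_\alpha\cc\theta$,'' but the argument is the same.
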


\begin{proof}
	Let $\mathbb{c}$ be a convergence on a Riesz space $E$. We prove that $\mathbb{rc}$-convergence possesses the three features of Definition \ref{convergence} on $E$.
	
	$(1)$ Take a constant net $x_\alpha:=x$. Then, we have $|x_\alpha-x|=|x-x|=\theta$. Consider an arbitrary non-negative net $y_\alpha\cc \theta$ in $E$. Since $\theta\leq y_\alpha+\mathbb{r}$ satisfies for all $\alpha\in A$ and for any positive vector $\mathbb{r}\in E_+$, it follows that $|x_\alpha-x|\leq y_\alpha+\mathbb{r}$ holds for every $\alpha\in A$. Therefore, we obtain $x_\alpha\rc x$.
	
	$(2)$ Suppose that $(x_{\alpha_\beta})_{\beta\in B}$ is any subnet of $x_\alpha\rc x$. Since $x_\alpha\rc x$, we have a net $y_\alpha\cc \theta$ in $E$ such that $|x_\alpha-x|\leq y_\alpha+\mathbb{r}$ for each $\alpha\in A$, and so $|x_{\alpha_\beta}-x|\leq y_{\alpha_\beta}+\mathbb{r}$ holds for all $\beta\in B$. It follows from $y_\alpha\cc \theta$ that $y_{\alpha_\beta}\cc \theta$ as $\beta$ varies over $B$ because $\mathbb{c}$ is a convergence on $E$. Thus, we obtain $x_{\alpha_\beta}\rc x$.
	
	$(3)$ Assume that $(x_\alpha)_{\alpha \in A,\alpha\geq \alpha_0} \rc x$ for some $\alpha_0\in A$. It follows that there exists a net $(y_\alpha)_{\alpha\in A}\cc \theta$ in $E$ such that $|x_\alpha - x|\leq y_\alpha + \mathbb{r}$ for all $\alpha \geq \alpha_0$. We need to show that $(x_\alpha)_{\alpha\in A}\rc x$. To do this, we find a net $(z_\alpha)_{\alpha\in A}\cc \theta$ such that $|x_\alpha - x|\leq z_\alpha + \mathbb{r}$ for all $\alpha\in A$. Let's define the net $(z_\alpha)_{\alpha\in A}$ as follows:
	\[z_\alpha:=\begin{cases} 
		y_\alpha, & \alpha\geq \alpha_0 \\
		|x_\alpha-x|,& otherwise 
	\end{cases}
	\]
	It follows that $(z_\alpha)_{\alpha \in A,\alpha\geq \alpha_0}=(y_\alpha)_{\alpha \in A,\alpha\geq \alpha_0}\cc \theta$. Thus, by the third property of Definition \ref{convergence}, we have $(z_\alpha)_{\alpha\in A}\cc \theta$ and $|x_\alpha - x|\leq z_\alpha + \mathbb{r}$ holds for all $\alpha\in A$, and so we obtain $(x_\alpha)_{\alpha\in A}\rc x$.
\end{proof}

One of the fundamental differences between rough convergence and other convergences on Riesz spaces such as order convergence is the that limit of a roughly convergent net is not necessarily unique. However, we provide conditions under which a net have a unique rough limit in the following two results.
\begin{proposition}\label{additive and unique}
	Suppose that $\mathbb{rc}$-convergence satisfies the additive property on a Riesz space $E$. Then, the $\mathbb{rc}$ limit of a net is unique if and only if every constant net is uniquely $\mathbb{rc}$-convergent in $E$.
\end{proposition}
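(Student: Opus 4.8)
The plan is to prove the nontrivial implication, since the converse is immediate: if every net has a unique $\mathbb{rc}$-limit, then constant nets, being nets, automatically inherit uniqueness. So I would assume that every constant net is uniquely $\mathbb{rc}$-convergent and show that this forces uniqueness for an arbitrary net. Concretely, suppose a net $(x_\alpha)_{\alpha\in A}$ satisfies both $x_\alpha\rc x$ and $x_\alpha\rc y$; the goal is to deduce $x=y$.

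First I would record that $\mathbb{rc}$-convergence is invariant under negation. From $x_\alpha\rc y$ there is a net $w_\alpha\cc\theta$ with $|x_\alpha-y|\le w_\alpha+\mathbb{r}$ for all $\alpha$, and since $|(-x_\alpha)-(-y)|=|x_\alpha-y|$, the very same $w_\alpha$ witnesses $-x_\alpha\rc -y$. (This step uses only the absolute value, so it does not require the linearity that $\mathbb{rc}$-convergence generally lacks.) Next I would invoke the standing additivity hypothesis. Pairing the first convergence $x_\alpha\rc x$ (indexed by $\alpha\in A$) with $-x_\beta\rc -y$ (indexed by $\beta\in A$), additivity produces
$$
(x_\alpha-x_\beta)_{(\alpha,\beta)\in A\times A}\rc x-y.
$$

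The crucial observation is then that the diagonal map $\alpha\mapsto(\alpha,\alpha)$ defines a subnet of $A\times A$: given any $(\alpha_0,\beta_0)$, directedness of $A$ furnishes $\alpha_1\ge\alpha_0,\beta_0$, whence $(\alpha,\alpha)\ge(\alpha_0,\beta_0)$ for every $\alpha\ge\alpha_1$. Along this subnet the net reduces to $x_\alpha-x_\alpha=\theta$, that is, to the \emph{constant} net $\theta$. By the subnet axiom of a convergence (Definition \ref{convergence}$(2)$), this constant net $\theta$ therefore $\mathbb{rc}$-converges to $x-y$. Since the constant net $\theta$ also $\mathbb{rc}$-converges to $\theta$ by the first convergence axiom, the hypothesized uniqueness of limits of constant nets forces $x-y=\theta$, i.e.\ $x=y$.

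The step I expect to be the main obstacle — and the one I would write out most carefully — is this passage to the diagonal: one must confirm both that additivity genuinely delivers convergence of the full product net over $A\times A$ (not merely along the diagonal) and that the diagonal embedding is cofinal, so that the subnet axiom legitimately applies and collapses the general uniqueness question to the uniqueness of a single constant net's limit. Everything else is bookkeeping with the definition of $\mathbb{rc}$-convergence.
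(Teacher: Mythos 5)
Your proof is correct and takes essentially the same route as the paper's: apply additivity to the net paired with itself so that $(x_\alpha-x_\beta)_{(\alpha,\beta)\in A\times A}\rc x-y$, collapse this to the constant net $\theta$, and invoke the assumed uniqueness of $\mathbb{rc}$-limits for constant nets (the paper compresses all of this into the single line $\theta=(x_\alpha-x_\alpha)\rc(x_1-x_2)$). The only differences are that you make explicit the cofinality of the diagonal and the appeal to the subnet axiom, which the paper leaves implicit, and you insert a harmless (and, given that additivity already covers the $\pm$ case, unnecessary) negation step.
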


\begin{proof}
	It is enough to show the sufficient case because the necessary case is trivial. Suppose that $x_\alpha\rc x_1$ and $x_\alpha\rc x_2$ in $E$. Then, it follows from the additivity of $\mathbb{rc}$-convergence that we have
	$$
	\theta=(x_\alpha-x_\alpha)\rc (x_1-x_2)
	$$
	Therefore, by the assumption, we get the desired result, $x_1=x_2$.
\end{proof}

The following proposition is a $\mathbb{rc}$-version of \cite[Prop.2.2]{AEG} with a similar proof.
\begin{proposition}\label{inequalities}
	Let $\mathbb{rc}$-convergence be additive on a Riesz space $E$. Then, the following statements are equivalent$:$
	\begin{enumerate}
		\item[$(i)$] the limit of $\mathbb{rc}$-convergent net is unique;
		\item[$(ii)$] $x_\alpha\rc x$,  $z_\alpha\rc z$, and $x_\alpha\ge z_\alpha$ for all $\alpha\in A$ implies $x\ge z$;
		\item[$(iii)$] The positive cone $E_+$ is $\mathbb{rc}$-closed in $E$.
	\end{enumerate}
\end{proposition}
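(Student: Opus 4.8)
The plan is to establish the cycle of implications $(i)\Rightarrow(ii)\Rightarrow(iii)\Rightarrow(i)$. Throughout, the workhorse will be the additivity hypothesis combined with a diagonal argument: if $x_\alpha\rc x$ and $z_\alpha\rc z$ share the index set $A$, then additivity gives $(x_\alpha-z_\beta)_{(\alpha,\beta)\in A\times A}\rc x-z$, and since the diagonal map $\alpha\mapsto(\alpha,\alpha)$ is cofinal in $A\times A$ it defines a subnet; Definition \ref{convergence}$(2)$ then yields the single-index statement $x_\alpha-z_\alpha\rc x-z$. I will also lean on the fact that $\mathbb{rc}$-convergence is a lattice convergence (Remark \ref{full conv}$(ii)$).

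For $(i)\Rightarrow(ii)$, I would assume uniqueness and take $x_\alpha\rc x$, $z_\alpha\rc z$ with $x_\alpha\ge z_\alpha$. Setting $w_\alpha:=x_\alpha-z_\alpha\ge\theta$, the diagonal device gives $w_\alpha\rc x-z$, while lattice convergence gives $|w_\alpha|\rc|x-z|$. Since $w_\alpha=|w_\alpha|$, the one net $(w_\alpha)$ $\mathbb{rc}$-converges to both $x-z$ and $|x-z|$, so uniqueness forces $x-z=|x-z|\ge\theta$, i.e.\ $x\ge z$. For $(ii)\Rightarrow(iii)$, I would take a net $(x_\alpha)$ in $E_+$ with $x_\alpha\rc x$, compare it with the constant net $\theta\rc\theta$ (Definition \ref{convergence}$(1)$) through $x_\alpha\ge\theta$, and apply $(ii)$ to conclude $x\ge\theta$; hence $x\in E_+$ and $E_+$ is $\mathbb{rc}$-closed.

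The last step $(iii)\Rightarrow(i)$ is where the roughness-specific subtlety appears. Suppose $x_\alpha\rc x$ and $x_\alpha\rc y$. Applying the diagonal device to the difference of the net with itself yields that the constant net $\theta$ $\mathbb{rc}$-converges to $x-y$, so it suffices to prove the auxiliary claim that $\theta\rc u$ forces $u=\theta$. The key observation is that a witnessing net $y_\alpha\cc\theta$ for $\theta\rc u$ satisfies $|\theta-u|=|u|=|-u|=|\theta-(-u)|$, so the very same net witnesses $\theta\rc-u$; since the constant net $\theta$ lies in $E_+$, closedness $(iii)$ gives both $u\in E_+$ and $-u\in E_+$, whence $u=\theta$ and therefore $x=y$. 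The main obstacle I anticipate is structural rather than computational: the additive hypothesis only produces a \emph{double} net, so every use of it must be paired with the verification that the diagonal is a genuine subnet in the paper's sense before Definition \ref{convergence}$(2)$ can be invoked. The sign trick $|u|=|-u|$ in the final claim is precisely what compensates for the fact that one cannot simply run the usual uniqueness argument available for ordinary (non-rough) convergence.
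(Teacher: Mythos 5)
Your proof is correct, but it is organized differently from the paper's and one of its implications is genuinely new. The paper does not prove a cycle: it establishes $(i)\Leftrightarrow(ii)$ and $(ii)\Leftrightarrow(iii)$ as four separate implications, and both directions between $(i)$ and $(ii)$ are routed through Proposition \ref{additive and unique} (uniqueness reduced to constant nets); in particular, its $(i)\Rightarrow(ii)$ uses the positive-part identity $(z_\alpha-x_\alpha)^+=\tfrac{1}{2}\bigl(z_\alpha-x_\alpha+|z_\alpha-x_\alpha|\bigr)$ to see that the constant net $\theta$ is $\mathbb{rc}$-convergent to $(z-x)^+$, and its $(ii)\Rightarrow(i)$ is a contradiction argument with two constant nets. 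Your $(i)\Rightarrow(ii)$ is a leaner version of the same idea: since $w_\alpha:=x_\alpha-z_\alpha\ge\theta$ equals $|w_\alpha|$, the lattice property (Remark \ref{full conv}) makes one net converge to both $x-z$ and $|x-z|$, and uniqueness is applied directly, with no appeal to Proposition \ref{additive and unique} and no positive-part manipulation. Your $(ii)\Rightarrow(iii)$ coincides with the paper's. Your $(iii)\Rightarrow(i)$ has no counterpart in the paper: the sign trick $|\theta-u|=|\theta-(-u)|$ shows that any witness for $\theta\rc u$ also witnesses $\theta\rc -u$, and $\mathbb{rc}$-closedness of $E_+$ then forces $u\in E_+\cap(-E_+)=\{\theta\}$; this closes the cycle with three implications instead of the paper's four and keeps the whole proof self-contained. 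A further point in your favor is that you make explicit the step both proofs need but the paper glosses over: additivity only produces the double net $(x_\alpha-z_\beta)_{(\alpha,\beta)\in A\times A}$, and passing to the single-index difference net requires checking that the diagonal $\alpha\mapsto(\alpha,\alpha)$ is a subnet in the sense of the paper's definition (which holds because $A$ is directed), after which Definition \ref{convergence}$(2)$ applies.
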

\begin{proof}
	$(i)\Longrightarrow(ii)$ \  Assume that $x_\alpha\rc x$, $z_\alpha\rc z$ and $z_\alpha\leq x_\alpha$ hold for all $\alpha\in A$. By using the additivity of $\mathbb{rc}$-convergence on $E$, we have $(z_\alpha-x_\alpha)_{\alpha\in A}\rc z-x$. Moreover, since $\mathbb{rc}$-convergence has the lattice property, we obtain $|z_\alpha-x_\alpha|\rc |z-x|$. It follows from the equalities $\theta=(z_\alpha-x_\alpha)^+=\frac{1}{2}(z_\alpha-x_\alpha+|z_\alpha-x_\alpha|)$ and $(z-x)^+=\frac{1}{2}(z-x+|z-x|)$ that $(\theta_\alpha)_{\alpha\in A}\rc (z-x)^+$. By considering Proposition \ref{additive and unique}, we obtaion that $(z-x)^+=\theta$, and so we get $z\leq x$.
	
	$(ii)\Longrightarrow(i)$ \ Suppose that $\mathbb{rc}$-convergence is not unique. Thus, by Proposition \ref{additive and unique}, there exists a constant net $x_\alpha:=x$ such that $x_\alpha\rc z$ for some $z\neq x$. Then, we have $\theta=x-x\rc x-z$. Now, take two new constant nets $u_\alpha=v_\alpha\equiv\theta$. Thus, we have $u_\alpha\rc \theta$ and $v_\alpha\rc |x-z|\neq\theta$. Therefore, we get the contradiction $z\nleq x$ with $(ii)$. Hence, $\mathbb{rc}$-convergence is unique.
	
	$(ii)\Longrightarrow(iii)$\ Let $(x_\alpha)_{\alpha\in A}$ be a net in $E_+$ such that $x_\alpha\rc x$. Consider the constatnt net $y_\alpha\equiv\theta\rc \theta$. Then, it follows that $\theta=y_\alpha\leq x_\alpha\rc x$, and so we get $\theta\le x$.
	
	$(iii)\Longrightarrow(ii)$\ Assume that $x_\alpha\rc x$, $z_\alpha\rc z$, and $z_\alpha\leq x_\alpha$ for all $\alpha\in A$. By additivity of $\mathbb{rc}$-convergence, we have $\theta\leq x_\alpha-z_\alpha\rc x-z$. Since $E_+$ is $\mathbb{rc}$-closed, we obtain the desired result $x-z\in E_+$, i.e., $z\leq x$.
\end{proof}

\begin{proposition}
	Let $\mathbb{c}$ be a full lattice convergence on a Riesz space $E$. If $x_\alpha\rc x$ and $y_\alpha\rc x$ in $E$, then any nets $(z_\alpha)_{\alpha\in A}$ that belong to a collection containing $x_\alpha$ and $y_\alpha$ is also $\mathbb{rc}$-convergent to $x$.
\end{proposition}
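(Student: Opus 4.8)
The plan is to read the hypothesis ``$(z_\alpha)$ belongs to a collection containing $x_\alpha$ and $y_\alpha$'' as the lattice (order) interval condition $x_\alpha\wedge y_\alpha\le z_\alpha\le x_\alpha\vee y_\alpha$ for every $\alpha\in A$, which is the natural Riesz-space reading and the one that makes the two boundary nets genuinely relevant. The whole argument then rests on manufacturing a single $\mathbb{c}$-null witness net for $(z_\alpha)$ out of the two witness nets supplied by $x_\alpha\rc x$ and $y_\alpha\rc x$.

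First I would record the elementary lattice inequality underlying everything: if $a\wedge b\le c\le a\vee b$, then $|c-x|\le|a-x|\vee|b-x|$ for any fixed $x$. Indeed, subtracting $x$ gives $(a-x)\wedge(b-x)\le c-x\le(a-x)\vee(b-x)$, and any element $t$ squeezed between $p\wedge q$ and $p\vee q$ satisfies $t\le p\vee q\le|p|\vee|q|$ and $t\ge p\wedge q\ge-(|p|\vee|q|)$, hence $|t|\le|p|\vee|q|$. Applying this with $a=x_\alpha$, $b=y_\alpha$, $c=z_\alpha$ yields $|z_\alpha-x|\le|x_\alpha-x|\vee|y_\alpha-x|$ for all $\alpha\in A$.

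Next I would bring in the two witnesses. By definition there are nets $u_\alpha\cc\theta$ and $v_\alpha\cc\theta$ with $|x_\alpha-x|\le u_\alpha+\mathbb{r}$ and $|y_\alpha-x|\le v_\alpha+\mathbb{r}$. Since $\mathbb{c}$ is a lattice convergence, $|u_\alpha|\cc\theta$, and from $\theta\le u_\alpha^+\le|u_\alpha|$ fullness gives $u_\alpha^+\cc\theta$; as $u_\alpha\le u_\alpha^+$ we may replace $u_\alpha$ by $u_\alpha^+$, and likewise $v_\alpha$ by $v_\alpha^+$, so without loss of generality both witnesses are positive. Combining the lattice inequality above with the translation identity $(u_\alpha+\mathbb{r})\vee(v_\alpha+\mathbb{r})=(u_\alpha\vee v_\alpha)+\mathbb{r}$ gives
$$
|z_\alpha-x|\le(u_\alpha\vee v_\alpha)+\mathbb{r}\qquad(\alpha\in A).
$$
Thus $(z_\alpha)$ is $\mathbb{rc}$-convergent to $x$ as soon as the net $w_\alpha:=u_\alpha\vee v_\alpha$ satisfies $w_\alpha\cc\theta$.

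The main obstacle is precisely this last reduction: proving $w_\alpha=u_\alpha\vee v_\alpha\cc\theta$. For positive witnesses one has $\theta\le u_\alpha\vee v_\alpha\le u_\alpha+v_\alpha$, so the point is equivalent to the sum (or supremum) of two positive $\mathbb{c}$-null nets being $\mathbb{c}$-null. I expect this to be the crux, because fullness only lets one pass from a null net down to a dominated positive net, while the lattice property only transports moduli; neither operation, on its own, builds a single convergent dominating net out of two independent ones. Hence the actual combination of the two witnesses is the genuinely nontrivial step, and it is where the full lattice (and, if needed, additive) behaviour of $\mathbb{c}$ on null nets must be exploited. Granting $w_\alpha\cc\theta$, the displayed bound $|z_\alpha-x|\le w_\alpha+\mathbb{r}$ is exactly the defining condition for $z_\alpha\rc x$, which closes the argument.
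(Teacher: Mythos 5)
Your reduction runs parallel to the paper's own proof, up to a different (and broader) reading of the hypothesis. The paper interprets ``belongs to a collection containing $x_\alpha$ and $y_\alpha$'' as the termwise condition $z_\alpha\in\{x_\alpha,y_\alpha\}$ for every $\alpha$; your order-interval reading $x_\alpha\wedge y_\alpha\le z_\alpha\le x_\alpha\vee y_\alpha$ subsumes that case, and your squeeze lemma $|z_\alpha-x|\le|x_\alpha-x|\vee|y_\alpha-x|$ is correct. So both arguments funnel into the same estimate $|z_\alpha-x|\le(u_\alpha\vee v_\alpha)+\mathbb{r}$. (Your positivity reduction for the witnesses is correct but unnecessary here: the translation identity $(u+\mathbb{r})\vee(v+\mathbb{r})=(u\vee v)+\mathbb{r}$ holds for arbitrary $u,v$, and under the paper's termwise reading one only needs $u_\alpha+\mathbb{r}\le(u_\alpha\vee v_\alpha)+\mathbb{r}$.)

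The genuine gap is that you stop exactly at the step that constitutes the proof: you flag $u_\alpha\vee v_\alpha\convc\theta$ as ``the crux,'' speculate that additivity might be needed, and then conclude only by \emph{granting} it. Nothing you established implies it; as you yourself observe, fullness (passing downward to dominated positive nets) and the modulus property, applied one at a time, cannot merge two independent null witnesses into a single $\mathbb{c}$-null dominating net. The paper closes precisely this step by invoking \cite[Thm.1]{AEG}: for a \emph{full lattice} convergence, the supremum of $\mathbb{c}$-null nets is again $\mathbb{c}$-null, so $u_\alpha\convc\theta$ and $v_\alpha\convc\theta$ yield $u_\alpha\vee v_\alpha\convc\theta$. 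That cited theorem is where the hypothesis ``$\mathbb{c}$ is a full lattice convergence'' does all of its work in this proposition; a complete write-up must either quote such a result or prove the continuity of $\vee$ at $\theta$ directly. Note also that you cannot quietly fall back on additivity, since additivity of $\mathbb{c}$ is not among the hypotheses of the statement, so a completion relying on it would prove a strictly weaker proposition.
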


\begin{proof}
	Assume that $x_\alpha\rc x$ and $y_\alpha\rc x$ in $E$ and $z_\alpha\in\{x_\alpha,y_\alpha\}$ for $\alpha\in A$. We want to show that $z_\alpha\rc x$. It follows from $x_\alpha\rc x$ and $y_\alpha\rc x$ that there exist two nets $u_\alpha\cc \theta$ and $v_\alpha\cc \theta$ in $E$ such that $|x_\alpha-x|\leq u_\alpha+\mathbb{r}$ and $|y_\alpha-x|\leq v_\alpha+\mathbb{r}$ for all $\alpha\in A$. Take the net $(w_\alpha)_{\alpha\in A}$ defined as $w_\alpha:=|z_\alpha-x|$. Therefore, we have two cases to consider:
	
	Case 1: If $z_\alpha = x_\alpha$, then $w_\alpha = |x_\alpha - x|$. Following from $|x_\alpha-x|\leq u_\alpha+\mathbb{r}$, we have $w_\alpha\leq u_\alpha+\mathbb{r}$ for all $\alpha\in A$.
	
	Case 2: If $z_\alpha = y_\alpha$, then $w_\alpha = |y_\alpha - x|$. By $|y_\alpha-x|\leq v_\alpha+\mathbb{r}$, we have $w_\alpha\leq v_\alpha+\mathbb{r}$ for each $\alpha\in A$.
	
	Therefore, we have $w_\alpha\leq u_\alpha\vee v_\alpha+\mathbb{r}$ for each $\alpha\in A$ and $z_\alpha\in\{x_\alpha,y_\alpha\}$. By appying \cite[Thm.1]{AEG}, we have $u_\alpha\vee v_\alpha\cc \theta$ because $\mathbb{c}$ is full lattice convergence. Therefore, we have shown that $w_\alpha\rc \theta$, and so we conclude that $z_\alpha\rc x$.
\end{proof}

It is well known that the linearity property is a very important concept for convergences. However, as can be seen in the next example, the addition of $\mathbb{rc}$-convergence does not hold in general.
\begin{example}\label{rc is not linear}
	Let $E$ be the Riesz space $\mathbb{R}$ with the usual order and operations, and positive elements $x,y$. Consider the index set $A = \mathbb{N}$ with the natural order. Define the nets  $x_\alpha:=x$ if $\alpha$ is even and $-x$ if $\alpha$ is odd, and similarly $y_\alpha:=y $ if $\alpha$ is even and $-y$ if $\alpha$ is odd. Then, we have $x_\alpha\rc x$ and $y_\alpha\rc y$ for the roughness degree $\mathbb{r}:=2\max\{x,y\}$.
	
	Now, we show that $(x_\alpha+y_\alpha)_{\alpha\in A}$ is not $\mathbb{rc}$-convergent to $x+y$ with roughness degree $\mathbb{r}$. Note that $(x_\alpha+y_\alpha)_{\alpha\in A}$ takes the values $x+y$ for even indices and $-(x+y)$ for odd indices. Suppose that $x_\alpha+y_\alpha\rc x+y$. By the definition of $\mathbb{rc}$-convergence, there must exist a net $z_\alpha\cc \theta$ in $E$ such that $|x_\alpha+y_\alpha-(x+y)|\leq z_\alpha+\mathbb{r}$ for all $\alpha\in A$. However, no matter how the net $(z_\alpha)_{\alpha\in A}$ is chosen, there will always be infinitely many indices $\alpha$ for which 
	$$
	|x_\alpha+y_\alpha-(x+y)|=2(x+y)>\mathbb{r}.
	$$
	This means that $(x_\alpha+y_\alpha)_{\alpha\in A}$ is not $\mathbb{rc}$-convergent to $x+y$ with roughness degree $\mathbb{r}$.
\end{example}

\begin{theorem}\label{full}
	Let $\mathbb{c}$ be a linear convergence on a Riesz space $E$. Then, the following statements are true$:$
	\begin{itemize}
		\item[(i)] If $(x_\alpha)_{\alpha\in A}\rc x$ and $(y_\beta)_{\beta\in B}\convsc y$, then $(x_\alpha+y_\beta)_{(\alpha,\beta)\in A\times B}\xrightarrow{\mathbb{(r+s)c}} x+y$;
		\item[(ii)] If $(x_\alpha)_{\alpha\in A}\rc x$ and $(t_\gamma)_{\gamma\in\Gamma}\to t$ in the standard topology on $\mathbb{R}$, then $(t_\gamma x_\alpha)_{(\gamma,\alpha)\in \Gamma\times A}\xrightarrow{(m\mathbb{r)c}} tx$ for some $m\in\mathbb{R}^+$;
		\item[(iii)] The $\mathbb{c}$-convergence implies $\mathbb{rc}$-convergence whenever $\mathbb{c}$ is a lattice convergence.
	\end{itemize}
\end{theorem}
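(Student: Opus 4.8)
The three assertions share a common engine: unpack each hypothesis into its defining rough inequality, combine those inequalities pointwise by the triangle/modulus estimates available in a Riesz space, and then use the linearity of $\mathbb{c}$ to verify that the resulting dominating net is $\mathbb{c}$-null. So I would begin by fixing, from $(x_\alpha)_{\alpha\in A}\rc x$, a net $u_\alpha\cc\theta$ with $|x_\alpha-x|\le u_\alpha+\mathbb{r}$ for all $\alpha$, and treating the second hypothesis in each part analogously.

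For (i), write $v_\beta\cc\theta$ with $|y_\beta-y|\le v_\beta+\mathbb{s}$ for all $\beta$. On the product directed set $A\times B$ the triangle inequality gives
$$
|(x_\alpha+y_\beta)-(x+y)|\le|x_\alpha-x|+|y_\beta-y|\le(u_\alpha+v_\beta)+(\mathbb{r}+\mathbb{s}).
$$
It then only remains to see that $w_{(\alpha,\beta)}:=u_\alpha+v_\beta$ is $\mathbb{c}$-null on $A\times B$, which is precisely the additivity of $\mathbb{c}$ (a consequence of the continuity of addition in the assumed linearity). Hence the double net is rough $\mathbb{c}$-convergent to $x+y$ with roughness degree $\mathbb{r}+\mathbb{s}$.

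For (ii), I would split via the modulus:
$$
|t_\gamma x_\alpha-tx|\le|t_\gamma|\,|x_\alpha-x|+|t_\gamma-t|\,|x|\le|t_\gamma|u_\alpha+|t_\gamma|\mathbb{r}+|t_\gamma-t|\,|x|.
$$
The term $|t_\gamma|\mathbb{r}$ is the crux: bounding it by a fixed multiple of $\mathbb{r}$ requires a uniform bound on $|t_\gamma|$, which is available only on a tail. Since $t_\gamma\to t$ in $\mathbb{R}$, I would pick $\gamma_0$ with $|t_\gamma|\le|t|+1=:m$ for $\gamma\ge\gamma_0$, so that $|t_\gamma|\mathbb{r}\le m\mathbb{r}$ there and
$$
|t_\gamma x_\alpha-tx|\le\big(|t_\gamma|u_\alpha+|t_\gamma-t|\,|x|\big)+m\mathbb{r}.
$$
The bracketed net is $\mathbb{c}$-null, being the image of the convergent net $(t_\gamma,u_\alpha)\to(t,\theta)$ in $\mathbb{R}\times E$ under the map $(s,z)\mapsto|s|z+|s-t|\,|x|$, which is continuous by linearity of $\mathbb{c}$ and sends $(t,\theta)$ to $\theta$. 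This gives rough $\mathbb{c}$-convergence with degree $m\mathbb{r}$ along the tail $\gamma\ge\gamma_0$, and property $(3)$ of Definition \ref{convergence}, valid for $\mathbb{rc}$-convergence by Theorem \ref{rc is a convergence}, upgrades it to the full double net on $\Gamma\times A$. I expect this tail-to-net passage, together with the extraction of the uniform constant $m$, to be the main obstacle, since without restricting to a tail the factor $|t_\gamma|$ need not be globally bounded.

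For (iii), linearity makes the translation $z\mapsto z-x$ continuous, so $x_\alpha\cc x$ forces $x_\alpha-x\cc\theta$, and the lattice-convergence hypothesis then yields $|x_\alpha-x|\cc|\theta|=\theta$. Setting $y_\alpha:=|x_\alpha-x|$, the trivial inequality $|x_\alpha-x|=y_\alpha\le y_\alpha+\mathbb{r}$ holds for every $\alpha$ and every positive $\mathbb{r}$, whence $x_\alpha\rc x$; this part is routine once both the linear and lattice properties are in hand.
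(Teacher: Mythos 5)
Your proposal is correct and follows essentially the same route as the paper's proof: the same triangle-inequality decomposition and appeal to linearity of $\mathbb{c}$ in (i), the same tail-bound $|t_\gamma|\le|t|+\rho$ (your $\rho=1$) combined with the tail property of $\mathbb{rc}$-convergence from Theorem \ref{rc is a convergence} in (ii), and the same translation-plus-lattice-property argument with the trivial inequality $|x_\alpha-x|\le|x_\alpha-x|+\mathbb{r}$ in (iii). The only cosmetic difference is that in (ii) the paper keeps the fixed constant $|t|+\rho$ as the coefficient of $u_\alpha$, whereas you keep the varying factor $|t_\gamma|$ inside the $\mathbb{c}$-null net; both are justified by the assumed linearity.
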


\begin{proof}	
	$(i)$ Consider nets $(x_\alpha)_{\alpha\in A}\rc x$ and $(y_\beta)_{\beta\in B}\rc y$ in $E$. Then, there exist nets $(z_\alpha)_{\alpha\in A}\cc \theta$ and $(w_\beta)_{\beta\in B}\cc \theta$ in $E$, such that 
	$$
	|x_\alpha-x|\le z_\alpha +\mathbb{r}\ \text{and} \ |y_\beta-y|\le w_\beta+\mathbb{s} 
	$$
	for all $\alpha \in A$ and $\beta\in B$. It follows
	$$
	|(x_\alpha+y_\beta)-(x+y)|\le|x_\alpha-x|+|y_\beta-y|\le z_\alpha+w_\beta+\mathbb{r+s}
	$$
	for each $\alpha\in A$ and $\beta\in B$. Since the convergence $\mathbb{c}$ is linear then $(z_\alpha+w_\beta)_{(\alpha,\beta)\in A\times B}\cc \theta$, and so we have $(x_\alpha+y_\beta)_{(\alpha,\beta)\in A\times B}\xrightarrow{\mathbb{(r+s)c}} x+y$.
	
	$(ii)$ We have some nets $(u_\alpha)_{\alpha\in A}\cc \theta$ in $E$, such that $|x_\alpha-x|\le u_\alpha +\mathbb{r}$ for all $\alpha \in A$. Now, take $\alpha_0\in A$ and let $\gamma_0\in\Gamma$ and $\rho\in \mathbb{R}$ such that $|t_\gamma|\le|t|+\rho$ for all $\gamma\ge\gamma_0$. Thus, we have
	\begin{eqnarray*}
		|t_\gamma x_\alpha-tx|&\le&|t_\gamma||x_\alpha-x|+|t_\gamma x -tx|\\ &\le& (|t|+\rho)(|x_\alpha-x|)+|t_\gamma -t||x|\\&\le&(|t|+\rho)(u_\alpha+\mathbb{r})+|t_\gamma -t||x| \\&=&\big((|t|+\rho)u_\alpha+|t_\gamma -t||x|\big)+(|t|+\rho)\mathbb{r}
	\end{eqnarray*}
	for each $\alpha\geq\alpha_0$ and $\gamma\geq\gamma_0$. It follows from the linearity of $\mathbb{c}$-convergence that
	$$
	\big((|t|+\rho)u_\alpha+|t_\gamma-t||x|\big)_{(\gamma,\alpha)\ge(\gamma_0,\alpha_0)\in \Gamma\times A}\cc \theta.
	$$
	Therefore, we obtain that $(t_\gamma x_\alpha)_{(\gamma,\alpha)\in \Gamma\times A}\xrightarrow{(m\mathbb{r)c}} tx$ for the positive scalar $m:=|t|+\rho$ because $\mathbb{rc}$ is a convergence on $E$.
	
	$(iii)$ Assume that $(z_\alpha)_{\alpha\in A}\cc z$. Then, we have $(z_\alpha-z)_{\alpha\in A}\cc \theta$ because $\mathbb{c}$ is linear. Moreover, it follows from the lattice property of $\mathbb{c}$-convergence that $|z_\alpha-z|\cc \theta$. Thus, by the inequality $|z_\alpha-z|\leq|z_\alpha-z|+\mathbb{r}$, we obtain $(z_\alpha)_{\alpha\in A}\rc z$.
\end{proof}

\begin{question}
	Under which conditions does $\mathbb{rc}$-convergence imply $\mathbb{c}$-convergence?
\end{question}

\begin{remark}
	If $x_\alpha\rc x$ in a Riesz space $E$, then $x_\alpha\cc x+\mathbb{r}$ whenever $x+\mathbb{r}\leq x_\alpha$ satisfies for all $\alpha\in A$ and $\mathbb{c}$ is a full lattice convergence on $E$. Indeed, assume that $(x_\alpha)_{\alpha\in A}\rc x$. Then, we have a net $(z_\alpha)_{\alpha\in A}\cc \theta$ in $E$ such that $|x_\alpha-x|\le z_\alpha +\mathbb{r}$ for all $\alpha\in A$. Thus, we have
	$$
	\theta\leq x_\alpha-x-\mathbb{r}\le z_\alpha.
	$$
	It follows from the full property of $\mathbb{c}$-convergence    that we get the desired result.
\end{remark}

\begin{proposition}\label{first thm}
	Let $(x_\alpha)_{\alpha\in A}$ be a net in a Riesz space $E$. If there exists another net $(z_\alpha)_{\alpha\in A}$ in $E$ such that $z_\alpha\rc x$ and $|x_\alpha-z_\alpha|\leq \mathbb{t}$ for all $\alpha\in A$, then $x_\alpha\xrightarrow{\mathbb{(r+t)c}} x$.
\end{proposition}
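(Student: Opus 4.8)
The plan is to reduce the claim to a single application of the triangle inequality in the lattice $E$, exactly mirroring the motivating computation recorded just before Example \ref{piece wise polynomial}. First I would unpack the hypothesis $z_\alpha\rc x$: by the definition of $\mathbb{rc}$-convergence there exists a net $(w_\alpha)_{\alpha\in A}$ with $w_\alpha\cc \theta$ such that
$$
|z_\alpha-x|\leq w_\alpha+\mathbb{r}
$$
for every $\alpha\in A$. The second hypothesis supplies $|x_\alpha-z_\alpha|\leq \mathbb{t}$ for all $\alpha\in A$, and both nets share the common directed index set $A$, so the two inequalities may be added termwise.

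Next I would combine them via the lattice triangle inequality $|a-b|\leq|a-c|+|c-b|$ applied with $a=x_\alpha$, $b=x$, $c=z_\alpha$. For each $\alpha\in A$ this yields
$$
|x_\alpha-x|\leq|x_\alpha-z_\alpha|+|z_\alpha-x|\leq \mathbb{t}+(w_\alpha+\mathbb{r})=w_\alpha+(\mathbb{r}+\mathbb{t}).
$$
Since $\mathbb{r}$ and $\mathbb{t}$ are positive vectors, their sum $\mathbb{r}+\mathbb{t}$ is again positive and hence a legitimate roughness degree. The very same null net $(w_\alpha)_{\alpha\in A}$, which already satisfies $w_\alpha\cc \theta$, serves verbatim as the witnessing net for $(\mathbb{r}+\mathbb{t})$-rough convergence, so I would conclude directly that $x_\alpha\xrightarrow{\mathbb{(r+t)c}} x$.

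There is essentially no obstacle here: the entire content is that roughness degrees are additive under the triangle inequality while the witnessing $\mathbb{c}$-null net is inherited unchanged. Notably, the argument requires no appeal to additivity, linearity, or the lattice property of the background convergence $\mathbb{c}$ — one only uses that $w_\alpha\cc \theta$ is preserved, which is immediate since the net is reused without modification. The sole points requiring care are bookkeeping ones, namely confirming the shared index set $A$ so the bounds can be summed pointwise and observing $\mathbb{r}+\mathbb{t}\geq\theta$ so that the resulting estimate genuinely exhibits $x_\alpha$ as $(\mathbb{r}+\mathbb{t})\mathbb{c}$-convergent to $x$.
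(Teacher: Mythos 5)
Your proof is correct and follows essentially the same route as the paper's own argument: extract the witnessing null net from $z_\alpha\rc x$, apply the triangle inequality $|x_\alpha-x|\leq|x_\alpha-z_\alpha|+|z_\alpha-x|\leq \mathbb{t}+w_\alpha+\mathbb{r}$, and reuse the same net $w_\alpha\cc\theta$ as the witness for the roughness degree $\mathbb{r}+\mathbb{t}$. Your added remarks about the shared index set and the positivity of $\mathbb{r}+\mathbb{t}$ are accurate bookkeeping points that the paper leaves implicit.
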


\begin{proof}
	Suppose that there exists a net $(z_\alpha)_{\alpha\in A}$ in $E$ which satisfies $z_\alpha\rc x$ and $|x_\alpha-z_\alpha|\leq \mathbb{t}$ for all $\alpha\in A$. Then, there exists $(y_\alpha)_{\alpha\in A}\cc \theta$ such that $|z_\alpha-x|\leq y_\alpha+\mathbb{r}$ for all $\alpha\in A$. So we have
	$$
	|x_\alpha-x|\leq |x_\alpha-z_\alpha|+|z_\alpha-x|\leq \mathbb{t}+y_\alpha+\mathbb{r}
	$$
	for each $\alpha\in A$. Thus, we get $x_\alpha\xrightarrow{\mathbb{(r+t)c}} x$.
\end{proof}

\begin{question}
	Is the converse of Proposition \ref{first thm} true?
\end{question}

The lattice operations are $\mathbb{rc}$-continuous in the following sense.
\begin{proposition}\label{LO are $p$-continuous}
	Let $\mathbb{c}$ be a convergence, and $(x_\alpha)_{\alpha \in A}$ and $(y_\beta)_{\beta \in B}$ be two nets in a Riesz space $E$. If $x_\alpha\rc x$ and $y_\beta\xrightarrow{\mathbb{tc}} y$, then $(x_\alpha\vee y_\beta)_{(\alpha,\beta)\in A\times B}\xrightarrow{\mathbb{(r+t)c}} x\vee y$.
\end{proposition}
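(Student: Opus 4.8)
The strategy is to reduce the assertion to a single pointwise lattice estimate and then dominate, much as one handles the continuity of $\vee$ under order or norm convergence. The engine of the argument is the elementary Riesz-space inequality
$$
|a\vee b - c\vee d|\le |a-c|+|b-d|,
$$
valid for all $a,b,c,d\in E$. I would first recall why this holds: from the non-expansiveness $|u\vee w - v\vee w|\le|u-v|$ (which follows from $u\vee w - v\vee w\le (u-v)^+\le|u-v|$ together with the symmetric estimate), the triangle inequality gives
$$
|a\vee b-c\vee d|\le|a\vee b-c\vee b|+|c\vee b-c\vee d|\le|a-c|+|b-d|.
$$

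Next I would unpack the two hypotheses. Since $x_\alpha\rc x$, there is a net $(z_\alpha)_{\alpha\in A}\cc\theta$ with $|x_\alpha-x|\le z_\alpha+\mathbb{r}$ for all $\alpha\in A$; since $y_\beta\xrightarrow{\mathbb{tc}}y$, there is a net $(w_\beta)_{\beta\in B}\cc\theta$ with $|y_\beta-y|\le w_\beta+\mathbb{t}$ for all $\beta\in B$. Applying the inequality above with $a=x_\alpha$, $b=y_\beta$, $c=x$, $d=y$ yields, for every $(\alpha,\beta)\in A\times B$,
$$
|(x_\alpha\vee y_\beta)-(x\vee y)|\le|x_\alpha-x|+|y_\beta-y|\le(z_\alpha+w_\beta)+(\mathbb{r}+\mathbb{t}).
$$
Thus the double net $(z_\alpha+w_\beta)_{(\alpha,\beta)\in A\times B}$ is the natural candidate for the dominating net, and $\mathbb{r}+\mathbb{t}$ is the candidate roughness degree.

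The decisive step, and the place where I expect the real work to sit, is verifying that $(z_\alpha+w_\beta)_{(\alpha,\beta)\in A\times B}\cc\theta$, so that the displayed bound witnesses $(x_\alpha\vee y_\beta)\xrightarrow{\mathbb{(r+t)c}}x\vee y$. This is precisely the statement that the sum of two $\mathbb{c}$-null nets, indexed over the product directed set, is again $\mathbb{c}$-null, which is exactly the additivity of $\mathbb{c}$ applied to $z_\alpha\cc\theta$ and $w_\beta\cc\theta$. I would therefore invoke (or carry as a standing hypothesis) that $\mathbb{c}$ is additive, which is automatic whenever $\mathbb{c}$ is linear; this is the only structural property of $\mathbb{c}$ the argument consumes beyond the three convergence axioms, and the failure of additivity in the rough setting observed in Example \ref{rc is not linear} indicates that some such hypothesis cannot simply be dropped. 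Granting it, the proof closes at once.
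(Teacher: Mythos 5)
Your proof is correct and follows essentially the same route as the paper's: the same non-expansiveness inequality $|a\vee b-a\vee c|\leq|b-c|$, the same one-coordinate-at-a-time decomposition via the triangle inequality, and the same dominating net $(z_\alpha+w_\beta)_{(\alpha,\beta)\in A\times B}$ with roughness degree $\mathbb{r}+\mathbb{t}$. Your flag about additivity is well placed: the paper's own proof simply asserts $(z_\alpha+w_\beta)\cc\theta$ even though the proposition hypothesizes only that $\mathbb{c}$ is a convergence, so the additivity (or linearity) of $\mathbb{c}$ that you identify as the decisive ingredient is indeed consumed there tacitly without being stated.
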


\begin{proof}
	From the assumption, there exist two nets $(z_\alpha)_{\alpha\in A}$ and $(w_\beta)_{\beta\in B}$ in $E$ satisfying $z_\alpha\cc \theta$ and $w_\beta\cc\theta$ in $E$ such that $|x_\alpha-x|\leq z_\alpha+\mathbb{r}$ and $|y_\beta-y|\leq w_\beta+\mathbb{t}$ for all $\alpha\in A$ and $\beta\in B$. It follows from the inequality $|a\vee b-a\vee c|\leq |b-c|$ that 	
	\begin{eqnarray*}
		|x_\alpha \vee y_\beta - x\vee y|&=&|x_\alpha \vee y_\beta -x_\alpha \vee y+x_\alpha \vee y- x\vee y|\\&\leq& |x_\alpha \vee y_\beta -x_\alpha \vee y|+|x_\alpha \vee y- x\vee y|\\&\leq&|y_\beta -y|+|x_\alpha-x|\\&\leq& z_\alpha+\mathbb{r}+w_\beta+\mathbb{t}
	\end{eqnarray*}
	is satisfied for all $\alpha\in A$ and $\beta\in B$. Since $(z_\alpha+w_\beta)\cc \theta$, then $x_\alpha\vee y_\beta\xrightarrow{\mathbb{(r+t)c}}x\vee y$ holds. 
\end{proof}

The statements of the following theorem is a rough $\mathbb{c}$-convergent version of \cite[Thm.2]{AEG}, and so the proofs of them will be omitted.
\begin{theorem}\label{double nets}
	Let $\mathbb{rc}$-convergence be linear on a Riesz space $E$. Then, the following statements hold$:$
	\begin{enumerate}[(i)]
		\item \ $x_\alpha\rc x$ iff $(x_\alpha-x)\rc \theta$ iff  $|x_\alpha-x|\rc \theta$;
		\item \ if $x_\alpha\rc x$, then $x_\alpha^+\rc x^+$$;$
		\item \ if $x_\alpha\rc x$, then $x_\alpha^-\rc x^-$$;$
		\item \ if $x_\alpha\rc x$ and $y\in X$, then $x_\alpha\vee y\rc x\vee y$$;$
		\item \ if $x_\alpha\rc x$ and $y\in X$, then $x_\alpha\wedge y\rc x\wedge y$$;$
		\item \ if $(x_\alpha)_{\alpha\in A}\rc x$ and $(y_\beta)_{\beta\in B}\rc y$, then $(x_\alpha\wedge y_\beta)_{(\alpha,\beta)\in A\times B}\rc  x\wedge y$$;$
		\item\ if $(x_\alpha)_{\alpha\in A}\rc x$ and $(y_\beta)_{\beta\in B}\rc y$, then $(x_\alpha\vee y_\beta)_{(\alpha,\beta)\in A\times B}\rc x\vee y$.
	\end{enumerate}
\end{theorem}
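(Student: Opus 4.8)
My plan is to handle the seven items in order of increasing difficulty, isolating (vi) and (vii) as the only places where the standing hypothesis that $\mathbb{rc}$-convergence is linear is genuinely needed.

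For (i) I would simply unwind the definition. A net $y_\alpha\cc\theta$ with $|x_\alpha-x|\le y_\alpha+\mathbb{r}$ witnesses $x_\alpha\rc x$; the defining condition for $(x_\alpha-x)\rc\theta$ is $|(x_\alpha-x)-\theta|\le y_\alpha+\mathbb{r}$, and that for $|x_\alpha-x|\rc\theta$ is $\big||x_\alpha-x|-\theta\big|\le y_\alpha+\mathbb{r}$. Since $|x_\alpha-x|\ge\theta$, all three reduce to the identical inequality $|x_\alpha-x|\le y_\alpha+\mathbb{r}$, so the three assertions coincide verbatim and (i) requires neither linearity nor the lattice property.

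For (ii)--(v) I would combine the dominating net from the hypothesis with the standard Birkhoff inequalities $|a\vee c-b\vee c|\le|a-b|$ and $|a\wedge c-b\wedge c|\le|a-b|$. Fixing $y_\alpha\cc\theta$ with $|x_\alpha-x|\le y_\alpha+\mathbb{r}$, taking $c=\theta$ gives $|x_\alpha^+-x^+|=|x_\alpha\vee\theta-x\vee\theta|\le|x_\alpha-x|\le y_\alpha+\mathbb{r}$, which is (ii) with the same witnessing net; writing $x_\alpha^-=(-x_\alpha)\vee\theta$ and using $|(-x_\alpha)\vee\theta-(-x)\vee\theta|\le|(-x_\alpha)-(-x)|=|x_\alpha-x|$ gives (iii); and taking $c=y$ in the two Birkhoff inequalities gives (iv) and (v). In every case the same $y_\alpha$ works, so the roughness stays $\mathbb{r}$, and none of (ii)--(v) actually needs linearity.

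The substance is in (vi) and (vii). The naive triangle bound $|x_\alpha\vee y_\beta-x\vee y|\le|x_\alpha-x|+|y_\beta-y|$ only yields roughness $2\mathbb{r}$; and while the sharper estimate $|x_\alpha\vee y_\beta-x\vee y|\le|x_\alpha-x|\vee|y_\beta-y|\le(z_\alpha\vee w_\beta)+\mathbb{r}$ (with $z_\alpha\cc\theta$, $w_\beta\cc\theta$ the two dominating nets) would keep the roughness at $\mathbb{r}$, it then demands $z_\alpha\vee w_\beta\cc\theta$, a fact about $\mathbb{c}$ unavailable from the sole hypothesis that $\mathbb{rc}$ is linear. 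I would instead argue through the lattice identities $a\vee b=\tfrac12\big(a+b+|a-b|\big)$ and $a\wedge b=\tfrac12\big(a+b-|a-b|\big)$. First lift the nets to the common directed set $A\times B$ by $\tilde x_{(\alpha,\beta)}:=x_\alpha$ and $\tilde y_{(\alpha,\beta)}:=y_\beta$; since the coordinate projections are monotone and cofinal, each lift is a subnet of its original, so $\tilde x\rc x$ and $\tilde y\rc y$ by the subnet axiom verified in Theorem \ref{rc is a convergence}. Then linearity gives $\tilde x\pm\tilde y\rc x\pm y$, the lattice property from Remark \ref{full conv}(ii) gives $|\tilde x-\tilde y|\rc|x-y|$, and one further use of linearity on the half-sum yields $\tfrac12(\tilde x+\tilde y)+\tfrac12|\tilde x-\tilde y|\rc x\vee y$, whose left-hand net is exactly $x_\alpha\vee y_\beta$; this proves (vii), and (vi) is identical with the minus sign. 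The main obstacle is precisely this two-net bookkeeping---verifying that the reduction to the single index set $A\times B$ via the cofinal-projection/subnet argument is legitimate, and recognizing that it is linearity that keeps the roughness at $\mathbb{r}$ rather than $2\mathbb{r}$. Example \ref{rc is not linear} confirms that without linearity the conclusion genuinely fails.
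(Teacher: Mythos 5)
Your proof is correct; note, though, that the paper offers no proof to compare against---it states the theorem as the rough analogue of \cite[Thm.2]{AEG} and explicitly omits the argument. Your write-up fills that omission along what is essentially the expected route. For (i), the three conditions are definitionally the same inequality, since $\big||x_\alpha-x|-\theta\big|=|(x_\alpha-x)-\theta|=|x_\alpha-x|$, so the same witness $y_\alpha\cc\theta$ and the same $\mathbb{r}$ serve all three. For (ii)--(v), Birkhoff's inequalities $|a\vee c-b\vee c|\le|a-b|$ and $|a\wedge c-b\wedge c|\le|a-b|$ transfer the single dominating net unchanged, so the roughness stays $\mathbb{r}$ and, as you observe, linearity is never invoked. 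For (vi)--(vii), your reduction to the common index set $A\times B$ is legitimate: the coordinate projection is monotone with cofinal image, hence the lifted nets are subnets in the paper's sense of the original nets, and Theorem \ref{rc is a convergence}(2) gives $\tilde x\rc x$ and $\tilde y\rc y$; then linearity of $\mathbb{rc}$, the lattice property of $\mathbb{rc}$ (Remark \ref{full conv}(ii)), and the identity $a\vee b=\frac{1}{2}\big(a+b+|a-b|\big)$ yield (vii), with (vi) following identically from $a\wedge b=\frac{1}{2}\big(a+b-|a-b|\big)$. This correctly exploits the fact that linearity is hypothesized for $\mathbb{rc}$ itself rather than for $\mathbb{c}$, which is exactly what keeps the roughness at $\mathbb{r}$ instead of the $2\mathbb{r}$ produced by the naive triangle estimate.

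One small inaccuracy in a side remark: Example \ref{rc is not linear} does not show that (vi)--(vii) genuinely fail without linearity; it only shows that additivity of $\mathbb{rc}$ can fail. Indeed, in that example one checks directly that $|x_\alpha\vee y_\beta-x\vee y|\le x+y\le 2\max\{x,y\}=\mathbb{r}$ for all indices, so $(x_\alpha\vee y_\beta)$ is in fact $\mathbb{rc}$-convergent to $x\vee y$ there. This does not affect your proof, which nowhere relies on that claim.
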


It is clearly observed that the statement $(i)$-$(v)$ of Theorem \ref{double nets} hold for a linear topology $\tau$ on a Riesz space $E$ and statement $(vi)$-$(vii)$ satisfy for locally full linear topology on Riesz spaces.
%%%%%%%%%%%%%%%%%%%%%%%%%%%%%%%%%%%%%%%%%%%%%%%%%%%%%%%%%%%%%%%%%%%
%%%%%%%%%%%%%%%%%%%%%%%%%%%%%%%%%%%%%%%%%%%%%%%%%%%%%%%%%%%%%%%%%%%
\section{Rough $\mathbb{c}$-limit points of nets}

It should be noted that the $\mathbb{r}$-limit point of a net $(x_\alpha)_{\alpha \in A}$ is not generally unique, especially for strictly positive elements of $\mathbb{r}$. Thus, we denote the set of all $\mathbb{r}$-limit points of a net $(x_\alpha)_{\alpha\in A}$ by
$$
\mathcal{L}^\mathbb{r}_{x_\alpha}:=\{x \in E: x_\alpha\rc x\}.
$$
It means that for each $x\in \mathcal{L}^\mathbb{r}_{x_\alpha}$ there exists a net  $y_\alpha\cc \theta$ such that $|x_\alpha-x| \leq y_\alpha+\mathbb{r}$ satisfies for all $\alpha\in A$. One can observe that $\mathcal{L}^\mathbb{r}_{x_\alpha}\cap \mathcal{L}^\mathbb{r}_{z_\alpha}=\emptyset$ can be hold for any $\mathbb{rc}$-convergent nets $(x_\alpha)_{\alpha\in A}$ and $(z_\alpha)_{\alpha\in A}$ in a Riesz space.	The next statement is a straightforward consequence of definitions, and hence its proof is not provided.
\begin{proposition}\label{the inclusion about r}
	Let $(x_\alpha)_{\alpha\in A}$ be a net in a Riesz space $E$. For elements  $\mathbb{r}_{1},\mathbb{r}_{2} \in E_+$ with $\mathbb{r}_{1}\leq\mathbb{r}_{2}$, the inclusion $\mathcal{L}^\mathbb{r_1}_{x_\alpha}\subseteq \mathcal{L}^\mathbb{r_2}_{x_\alpha}$ holds.
\end{proposition}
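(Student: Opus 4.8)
The plan is to argue directly from the definition of the limit-point set, since the inclusion is essentially a monotonicity statement in the roughness degree. I would fix an arbitrary element $x \in \mathcal{L}^{\mathbb{r}_1}_{x_\alpha}$ and show that the very same data witnessing $x_\alpha \xrightarrow{\mathbb{r_1c}} x$ also witnesses $x_\alpha \xrightarrow{\mathbb{r_2c}} x$, so that $x \in \mathcal{L}^{\mathbb{r}_2}_{x_\alpha}$.

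Concretely, membership $x \in \mathcal{L}^{\mathbb{r}_1}_{x_\alpha}$ unpacks to the existence of a net $(y_\alpha)_{\alpha\in A}$ with $y_\alpha \cc \theta$ and $|x_\alpha - x| \leq y_\alpha + \mathbb{r}_1$ for every $\alpha \in A$. The only step is then to upgrade the right-hand bound: since $\mathbb{r}_1 \leq \mathbb{r}_2$, the compatibility of the order with addition (recalled in the Introduction) gives $y_\alpha + \mathbb{r}_1 \leq y_\alpha + \mathbb{r}_2$, and transitivity yields $|x_\alpha - x| \leq y_\alpha + \mathbb{r}_2$ for all $\alpha$. Keeping the same null net $(y_\alpha)_{\alpha\in A}$, this is precisely the defining inequality for $x_\alpha \xrightarrow{\mathbb{r_2c}} x$, hence $x \in \mathcal{L}^{\mathbb{r}_2}_{x_\alpha}$.

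There is genuinely no obstacle here; the content is simply that the roughness degree enters the definition only as an additive term, so enlarging it can only relax the bound. The single point worth stating explicitly is that the witnessing net $(y_\alpha)_{\alpha\in A}$ need not be altered — no appeal to Dedekind completeness, to any structural property of $\mathbb{c}$, or to an auxiliary net is required — which is exactly why the result is flagged as an immediate consequence of the definitions.
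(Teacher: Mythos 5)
Your proof is correct and is exactly the straightforward argument the paper has in mind: it states this proposition as an immediate consequence of the definitions and omits the proof. Keeping the same witnessing net $(y_\alpha)_{\alpha\in A}$ and using $|x_\alpha - x| \leq y_\alpha + \mathbb{r}_1 \leq y_\alpha + \mathbb{r}_2$ is precisely the intended one-line justification.
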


\begin{theorem}\label{order 2r}
	Let $(x_\alpha)_{\alpha\in A}$ be a net in a Riesz space $E$. Then, we have $\sup\{|x-y|:x,y\in \mathcal{L}^\mathbb{r}_{x_\alpha}\}\leq2\mathbb{r}$ whenever the supremum exists for an additive $\mathbb{c}$-convergence, and also there is no smaller bound.
\end{theorem}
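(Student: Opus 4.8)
The plan is to split the statement into two independent parts: the upper bound $\sup\{|x-y|\}\le 2\mathbb{r}$, and the sharpness claim that no smaller constant works. For the upper bound it is enough to prove the pointwise estimate $|x-y|\le 2\mathbb{r}$ for \emph{every} pair $x,y\in\mathcal{L}^\mathbb{r}_{x_\alpha}$; once this is known, $2\mathbb{r}$ is an upper bound of the set $\{|x-y|:x,y\in\mathcal{L}^\mathbb{r}_{x_\alpha}\}$, and the least upper bound, assumed to exist, cannot exceed it. Note that the nets dominating $|x_\alpha-x|$ and $|x_\alpha-y|$ depend on the chosen pair, so one really must argue pairwise rather than trying to bound the supremum through a single $\mathbb{c}$-null net.

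First I would unwind the definition: since $x_\alpha\rc x$ and $x_\alpha\rc y$, there exist nets $u_\alpha\cc\theta$ and $v_\alpha\cc\theta$ with $|x_\alpha-x|\le u_\alpha+\mathbb{r}$ and $|x_\alpha-y|\le v_\alpha+\mathbb{r}$ for all $\alpha$. The triangle inequality evaluated at a common index $\alpha$ then yields
$$
|x-y|\le|x-x_\alpha|+|x_\alpha-y|\le u_\alpha+v_\alpha+2\mathbb{r}\qquad(\alpha\in A).
$$
Next I would invoke additivity of $\mathbb{c}$: it gives $(u_\alpha+v_\beta)_{(\alpha,\beta)\in A\times A}\cc\theta$, and since the diagonal map $\alpha\mapsto(\alpha,\alpha)$ is a subnet map (directedness of $A$ makes it cofinal in $A\times A$), property $(2)$ of Definition \ref{convergence} delivers $(u_\alpha+v_\alpha)_{\alpha\in A}\cc\theta$.

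The decisive step is the passage from $|x-y|\le(u_\alpha+v_\alpha)+2\mathbb{r}$ with $u_\alpha+v_\alpha\cc\theta$ to $|x-y|\le 2\mathbb{r}$, and this is where I expect the main obstacle. Writing $q:=(|x-y|-2\mathbb{r})^+\ge\theta$, the inequality forces $\theta\le q\le u_\alpha+v_\alpha$ for all $\alpha$ (using $u_\alpha+v_\alpha\ge\theta$), so the constant net $q$ is squeezed beneath a net $\mathbb{c}$-converging to $\theta$. To conclude $q=\theta$ one needs more than bare additivity: invoking the fullness of $\mathbb{c}$ gives the constant net $q\cc\theta$, and then the fact that a constant net has $\theta$ as its only limit (a separatedness/Hausdorff-type condition) forces $q=\theta$. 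In the Dedekind complete setting this step is cleanest through the $\limsup$ characterization recorded just after the definition of $\mathbb{rc}$-convergence, via subadditivity of $\limsup$:
$$
|x-y|=\limsup|x-y|\le\limsup\big(|x_\alpha-x|+|x_\alpha-y|\big)\le\limsup|x_\alpha-x|+\limsup|x_\alpha-y|\le 2\mathbb{r}.
$$

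Finally, for sharpness I would test the constant net $x_\alpha\equiv\theta$. Choosing $y_\alpha\equiv\theta\cc\theta$, both $\mathbb{r}$ and $-\mathbb{r}$ lie in $\mathcal{L}^\mathbb{r}_\theta$, since $|\theta\mp\mathbb{r}|=\mathbb{r}\le\theta+\mathbb{r}$; hence $\sup\{|x-y|\}\ge|\mathbb{r}-(-\mathbb{r})|=2\mathbb{r}$. Combined with the upper bound, the supremum equals exactly $2\mathbb{r}$ and is attained at $x=\mathbb{r},\ y=-\mathbb{r}$, which certifies that $2\mathbb{r}$ cannot be replaced by any strictly smaller bound. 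Thus the two halves of the statement follow, with the squeeze-to-$\theta$ step being the delicate point that pins down which hypotheses on $\mathbb{c}$ beyond additivity are genuinely required.
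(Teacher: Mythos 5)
Your route is essentially the paper's, in both halves. For the upper bound the paper argues exactly as you do: fix $x,y\in\mathcal{L}^{\mathbb{r}}_{x_\alpha}$, pick $\mathbb{c}$-null nets $u_\alpha,v_\alpha$ with $|x_\alpha-x|\le u_\alpha+\mathbb{r}$ and $|x_\alpha-y|\le v_\alpha+\mathbb{r}$, and run the triangle inequality through $x_\alpha$ to get $|x-y|\le(u_\alpha+v_\alpha)+2\mathbb{r}$. For sharpness the paper shows that for any net $z_\alpha\cc z$ the ball $\mathcal{A}_z=\{w\in E:|w-z|\le\mathbb{r}\}$ lies inside $\mathcal{L}^{\mathbb{r}}_{z_\alpha}$ and has diameter $2\mathbb{r}$ (it even asserts equality $\mathcal{A}_z=\mathcal{L}^{\mathbb{r}}_{z_\alpha}$, though only the inclusion is proved or needed); your constant net $x_\alpha\equiv\theta$ with the pair $\pm\mathbb{r}$ is precisely the case $z=\theta$, and is cleaner since constant nets converge by axiom $(1)$. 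Your diagonal-subnet argument deriving $(u_\alpha+v_\alpha)_{\alpha\in A}\cc\theta$ from the double-net formulation of additivity is correct (the diagonal is cofinal in $A\times A$, hence a subnet in the paper's sense) and is a step the paper uses silently.

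The ``decisive step'' you isolate is exactly where the paper's proof is thinnest: having reached $|x-y|\le(u_\alpha+v_\alpha)+2\mathbb{r}$ with $u_\alpha+v_\alpha\cc\theta$, the paper simply declares that additivity yields $|x-y|\le2\mathbb{r}$, with no further argument. You are right that this implication does not follow from bare additivity (a constant positive vector dominated by a $\mathbb{c}$-null net need not be $\theta$ for a general convergence), so the obstacle you flag is a gap in the paper itself, not a defect peculiar to your write-up; but by the same token your proof, as written, is only complete under hypotheses beyond those in the statement --- fullness plus uniqueness of limits of constant nets, or Dedekind completeness with the $\limsup$ characterization. One caveat on your proposed patch: the dominating nets need not be positive (the definition only forces $u_\alpha\ge-\mathbb{r}$), so the inequality $\theta\le q\le u_\alpha+v_\alpha$ you want for the fullness argument is not automatic. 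You should instead pass to $q\le(u_\alpha+v_\alpha)^{+}$, and to keep $\mathbb{c}$-convergence of the positive part you additionally need $\mathbb{c}$ to be a lattice (as well as full) convergence. In short: your proof matches the paper's argument, correctly exposes its real gap, and closes that gap only at the price of strengthening the hypotheses.
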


\begin{proof}
	Suppose that $\sup\{|x-y|:x,y\in \mathcal{L}^\mathbb{r}_{x_\alpha}\}$ exists and equal to $w\in E_+$. By the contrary fact, assume that $w\nleq 2\mathbb{r}$. Hence, there exist some elements $x,y\in \mathcal{L}^\mathbb{r}_{x_\alpha}$ satisfying $|x-y|\nleq 2\mathbb{r}$. On the other hand, for arbitrary $x,y\in \mathcal{L}^\mathbb{r}_{x_\alpha}$, there exist some nets $u_\alpha\cc \theta$ and $v_\alpha\cc \theta$ such that
	$$
	|x_\alpha-x|\leq u_\alpha+\mathbb{r} \ \ \ \text{and} \ \ \ |x_\alpha-y|\leq v_\alpha+\mathbb{r}
	$$
	hold for all $\alpha\in A$. Thus, it follows that
	$$
	|x-y|\leq |x_\alpha-x|+|x_\alpha-y|\leq (u_\alpha+v_\alpha)+2\mathbb{r}.
	$$
	Since $\mathbb{c}$ is an additive convergence, we get $|x-y|\leq2\mathbb{r}$. Therefore, $2\mathbb{r}$ is an upper bound of the given set in the assumption.
	
	Now, take a net $(z_\alpha)_{\alpha\in A}$ in $E$ such that $z_\alpha\cc z$. Then, we have the following inequality
	$$
	|z_\alpha-w|\leq |z_\alpha-z|+|z-w|\leq |z_\alpha-z|+\mathbb{r}
	$$
	for every $w\in \mathcal{A}_z:=\{w\in E:|w-z|\leq\mathbb{r}\}$. Therefore, we get $\mathcal{A}_z= \mathcal{L}^\mathbb{r}_{z_\alpha}$. It follows from $\sup\{|x-y|:x,y\in \mathcal{L}^\mathbb{r}_{x_\alpha}\}=2\mathbb{r}$ that the upper bound $2\mathbb{r}$ of $\mathbb{r}$-limit set cannot be decreased anymore.
\end{proof}

The result in Theorem \ref{order 2r} indicates that when $\mathbb{r}=\theta$, for any net $(x_\alpha)_{\alpha\in A}$ in a Riesz space $E$, it's evident that $\sup\{|x-y|:x,y\in \mathcal{L}_{x_\alpha}^\mathbb{r}\}=\theta$. This implies that $\mathcal{L}_{x_\alpha}^\mathbb{r}$ is either an empty set or contains only one element. Additionally, the uniqueness of the rough $\mathbb{c}$-limit becomes apparent in this case. 

The following result is straightforward by considering the property $(2)$ of Definition \ref{convergence}, and so we omit its proof.
\begin{theorem}\label{suset inclusion}
	Let $(x_{\alpha_\beta})_{\beta\in B}$ be a subnet of a net $(x_\alpha)_{\alpha\in A}$ in a Riesz space $E$. Then, we have the following inclusion $\mathcal{L}^{\mathbb{r}}_{x_\alpha}\subseteq \mathcal{L}^{\mathbb{r}}_{x_{\alpha_\beta}}$.
\end{theorem}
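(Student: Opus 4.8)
The plan is to unwind the definition of $\mathcal{L}^{\mathbb{r}}_{x_\alpha}$ and show that membership in this set is inherited by subnets, which is essentially the content of the second axiom in Definition \ref{convergence} applied to $\mathbb{rc}$-convergence. Since Theorem \ref{rc is a convergence} already guarantees that $\mathbb{rc}$-convergence is itself a convergence, the inclusion should follow almost immediately; the only real work is to make the passage to the subnet explicit.

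First I would take an arbitrary $x\in\mathcal{L}^{\mathbb{r}}_{x_\alpha}$. By the defining description of the limit set this means $x_\alpha\rc x$, so there exists a net $(y_\alpha)_{\alpha\in A}$ with $y_\alpha\cc\theta$ and $|x_\alpha-x|\le y_\alpha+\mathbb{r}$ for all $\alpha\in A$. Next I would restrict along the subnet map $\phi\colon B\to A$: writing $y_{\alpha_\beta}:=y_{\phi(\beta)}$, the inequality specializes to $|x_{\alpha_\beta}-x|\le y_{\alpha_\beta}+\mathbb{r}$ for every $\beta\in B$, simply because the original inequality holds at every index of $A$ and each $\phi(\beta)$ lies in $A$.

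The one step that invokes a cited fact is that $(y_{\alpha_\beta})_{\beta\in B}$ still $\mathbb{c}$-converges to $\theta$. This is exactly property $(2)$ of Definition \ref{convergence}: since $\mathbb{c}$ is a convergence and $(y_{\alpha_\beta})_{\beta\in B}$ is a subnet of the $\mathbb{c}$-null net $(y_\alpha)_{\alpha\in A}$, we obtain $y_{\alpha_\beta}\cc\theta$. Combining this with the displayed inequality gives $x_{\alpha_\beta}\rc x$, that is, $x\in\mathcal{L}^{\mathbb{r}}_{x_{\alpha_\beta}}$; as $x$ was arbitrary, the inclusion $\mathcal{L}^{\mathbb{r}}_{x_\alpha}\subseteq\mathcal{L}^{\mathbb{r}}_{x_{\alpha_\beta}}$ follows. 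I do not anticipate any genuine obstacle: the whole argument rests on the subnet-stability of the background convergence $\mathbb{c}$, which is already built into its axioms. Equivalently, one could bypass the unwinding entirely and cite part $(2)$ of the proof of Theorem \ref{rc is a convergence}, which establishes precisely that every subnet of an $\mathbb{rc}$-convergent net $\mathbb{rc}$-converges to the same limit.
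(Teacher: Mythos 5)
Your proof is correct and matches the paper's intended argument exactly: the paper omits the proof, noting only that the result is ``straightforward by considering the property $(2)$ of Definition \ref{convergence},'' which is precisely the subnet-stability step you make explicit (and which also appears verbatim as part $(2)$ of the proof of Theorem \ref{rc is a convergence}). Your unwinding of the definition, restriction of the dominating net $(y_\alpha)$ along the subnet map $\phi$, and appeal to axiom $(2)$ for $y_{\alpha_\beta}\cc\theta$ is exactly what the paper leaves to the reader.
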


It is well known that unbounded sequences in normed spaces lack rough limit points. This observation also holds true in the context of rough $\mathbb{c}$-convergence. This similarity extends to rough convergence can be observed. In the following three results, we present the rough $\mathbb{c}$-convergent version of the basic results related to rough convergence, specifically focusing on order boundedness, closeness and convexity.
\begin{theorem}\label{two properties}
	Let $(x_\alpha)_{\alpha\in A}$ be a net in a Riesz space $E$. Then, the following statements hold:
	\begin{itemize}
		\item[(i)] If $(x_\alpha)_{\alpha\in A}$ is order bounded and $E$ is  Dedekind complete, then there is a non-negative element $\mathbb{r}$ such that $\mathcal{L}^\mathbb{r}_{x_\alpha}\neq\emptyset$ for any convergence $\mathbb{c}$ on $E$;
		\item[(ii)] If $\mathcal{L}^\mathbb{r}_{x_n}\neq\emptyset$ for some non-negative elements $\mathbb{r}$ and $\mathbb{c}$ is the monotonically decreasing convergence on $E$, then the sequence $(x_n)$ is order bounded.
	\end{itemize}
\end{theorem}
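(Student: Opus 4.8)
The plan is to treat the two implications separately. The common thread is to reduce each to producing an explicit order bound on $|x_\alpha - x|$, after which the auxiliary net witnessing $\mathbb{rc}$-convergence can simply be read off. In $(i)$ that auxiliary net will be the constant net $\theta$, which $\mathbb{c}$-converges to $\theta$ for every convergence by Definition \ref{convergence}$(1)$, whereas in $(ii)$ the auxiliary net is handed to us by the hypothesis $\mathcal{L}^\mathbb{r}_{x_n}\neq\emptyset$.

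For $(i)$ I would start from order boundedness: there are $a,b \in E$ with $a \leq x_\alpha \leq b$ for every $\alpha \in A$. Since $E$ is Dedekind complete and the set $\{x_\alpha : \alpha \in A\}$ is bounded above and below, the elements $\overline{x} := \sup_\alpha x_\alpha$ and $\underline{x} := \inf_\alpha x_\alpha$ exist in $E$. I then set $x := \tfrac{1}{2}(\overline{x} + \underline{x})$ and $\mathbb{r} := \tfrac{1}{2}(\overline{x} - \underline{x}) \in E_+$. From $\underline{x} \leq x_\alpha \leq \overline{x}$ one gets $-\mathbb{r} \leq x_\alpha - x \leq \mathbb{r}$, hence $|x_\alpha - x| \leq \mathbb{r}$ for all $\alpha$. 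Taking $y_\alpha \equiv \theta \cc \theta$ yields $|x_\alpha - x| \leq y_\alpha + \mathbb{r}$, so $x \in \mathcal{L}^\mathbb{r}_{x_\alpha}$ and the set is non-empty, for any convergence $\mathbb{c}$.

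For $(ii)$ I would assume $\mathcal{L}^\mathbb{r}_{x_n} \neq \emptyset$, so there is some $x$ with $x_n \rc x$. By definition there is a sequence $(y_n)$ with $y_n \cc \theta$ --- that is, $y_n \downarrow \theta$ --- such that $|x_n - x| \leq y_n + \mathbb{r}$ for all $n$. Since $(y_n)$ is decreasing we have $y_n \leq y_1$ for every $n$, whence $|x_n - x| \leq y_1 + \mathbb{r}$. This rearranges to $x - (y_1 + \mathbb{r}) \leq x_n \leq x + (y_1 + \mathbb{r})$, giving explicit order bounds, so $(x_n)$ is order bounded.

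The triangle-type estimate and the final rearrangement are immediate; the point that really deserves care is the quantifier ``for any convergence $\mathbb{c}$'' in $(i)$. Since nothing is assumed about $\mathbb{c}$ beyond the axioms of Definition \ref{convergence}, the only auxiliary net guaranteed to $\mathbb{c}$-converge to $\theta$ is the constant net $\theta$; this forces $\mathbb{r}$ to dominate $|x_\alpha - x|$ at \emph{all} indices simultaneously, which is precisely why I use the global supremum and infimum (hence Dedekind completeness) rather than a tail-based $\limsup/\liminf$. One could alternatively bypass Dedekind completeness altogether by taking $x=\theta$ and $\mathbb{r}=|a|\vee|b|$, at the cost of a cruder roughness degree; the supremum/infimum choice is what makes $\mathbb{r}$ sharp and explains the stated hypothesis. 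In $(ii)$ the corresponding subtlety is that the monotone structure of $\mathbb{c}$ does the work for free: decreasingness bounds $(y_n)$ by its initial term, converting the rough estimate into a genuine order bound.
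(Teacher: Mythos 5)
Your proof is correct and follows essentially the same route as the paper's: in $(i)$ both arguments reduce to a uniform order bound plus the constant net $\theta$ (which $\mathbb{c}$-converges to $\theta$ by axiom $(1)$ of Definition \ref{convergence}) as the auxiliary witness --- the paper centers at $\theta$ with $\mathbb{r}=\sup_\alpha|x_\alpha|$, while you center at $\tfrac{1}{2}(\overline{x}+\underline{x})$ with the slightly sharper $\mathbb{r}=\tfrac{1}{2}(\overline{x}-\underline{x})$ --- and in $(ii)$ both proofs use $y_n\leq y_1$ to get $|x_n|\leq y_1+\mathbb{r}+|x|$. Your side remark that Dedekind completeness can be bypassed entirely by taking $x=\theta$ and $\mathbb{r}=|a|\vee|b|$ is correct and in fact shows the completeness hypothesis in $(i)$ is not essential, a point the paper does not make.
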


\begin{proof}
	$(i)$ Since $(x_\alpha)_{\alpha\in A}$ is order bounded in a Dedekind complete Riesz space $E$, then the supremum of $(|x_\alpha|)_{\alpha\in A}$ exists and equal to a non-negative element $\mathbb{r}\in E$. So, for any nonnegative net $\theta\leq y_\alpha\cc \theta$ in $E$, we have $|x_\alpha-\theta|\leq \mathbb{r}+y_\alpha$ for all $\alpha\in A$. Hence, we obtain that $\theta\in \mathcal{L}^\mathbb{r}_{x_\alpha}$, i.e., we obtain $\mathcal{L}^\mathbb{r}_{x_\alpha}\neq\emptyset$.
	
	$(ii)$ Assume that $\mathcal{L}^\mathbb{r}_{x_n}\neq\emptyset$ satisfies for some non-negative element $\mathbb{r}\in E$. Thus, there exists some elements $x\in \mathcal{L}^\mathbb{r}_{x_n}$ such that, for some sequence $y_n\downarrow \theta$, we have $|x_n-x|\leq y_n+\mathbb{r}$ for each $n\in \mathbb{N}$. Then, we have $|x_n-x|\leq y_n+\mathbb{r}\leq y_1+\mathbb{r}$ for all $n\in\mathbb{N}$. It follows that
	$$
	|x_n|\leq|x_n-x|+|x|\leq y_1+\mathbb{r}+|x|
	$$
	for every $n$. Therefore, we obtain that $(x_n)$ is an order bounded sequence.
\end{proof}

\begin{proposition}\label{closedness}
	Let $E$ be a Riesz space and $\mathbb{c}$ be a linear convergence on $E$ having the lattice property. Then, the set $\mathcal{L}_{x_\alpha}^\mathbb{r}$ for an arbitrary net $(x_\alpha)_{\alpha\in A}$ in $E$ is $\mathbb{c}$-closed.
\end{proposition}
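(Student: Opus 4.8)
The plan is to verify the definition of a $\mathbb{c}$-closed set directly. I take an arbitrary net $(\ell_\gamma)_{\gamma\in\Gamma}$ lying in $\mathcal{L}^\mathbb{r}_{x_\alpha}$ with $\ell_\gamma\cc \ell$, and aim to show that the limit $\ell$ again belongs to $\mathcal{L}^\mathbb{r}_{x_\alpha}$, i.e. that $x_\alpha\rc \ell$. First I would unpack the membership hypothesis: for every $\gamma\in\Gamma$ we have $x_\alpha\rc \ell_\gamma$, so there is a net $(y^\gamma_\alpha)_{\alpha\in A}\cc \theta$ with $|x_\alpha-\ell_\gamma|\le y^\gamma_\alpha+\mathbb{r}$ for all $\alpha\in A$; replacing each witness by its modulus (which still $\mathbb{c}$-converges to $\theta$ by the lattice property) I may assume $y^\gamma_\alpha\ge\theta$. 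Simultaneously, since $\mathbb{c}$ is linear and has the lattice property, from $\ell_\gamma\cc \ell$ I obtain $\ell_\gamma-\ell\cc \theta$ and then $|\ell_\gamma-\ell|\cc \theta$.

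The next step is a triangle-inequality estimate isolating $\ell$: for every $\alpha\in A$ and $\gamma\in\Gamma$,
$$
|x_\alpha-\ell|\le |x_\alpha-\ell_\gamma|+|\ell_\gamma-\ell|\le y^\gamma_\alpha+|\ell_\gamma-\ell|+\mathbb{r}.
$$
To produce a single dominating net indexed by $A$ I would work with the explicit candidate $\tilde y_\alpha:=(|x_\alpha-\ell|-\mathbb{r})^+$. This choice is essentially forced: the identity $\tilde y_\alpha+\mathbb{r}=|x_\alpha-\ell|\vee \mathbb{r}\ge |x_\alpha-\ell|$ holds automatically, so the required inequality $|x_\alpha-\ell|\le \tilde y_\alpha+\mathbb{r}$ costs nothing. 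Moreover, since $t\mapsto t^+$ is contractive, $\tilde y_\alpha\le (|x_\alpha-\ell_\gamma|-\mathbb{r})^+ +|\ell_\gamma-\ell|\le y^\gamma_\alpha+|\ell_\gamma-\ell|$, so the display yields the domination $\theta\le \tilde y_\alpha\le y^\gamma_\alpha+|\ell_\gamma-\ell|$ for every $\gamma\in\Gamma$. Thus the whole proposition reduces to the single assertion $\tilde y_\alpha\cc \theta$.

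The hard part is precisely this last convergence, because it is a genuine double limit: for each fixed $\gamma$ the bounding net $(y^\gamma_\alpha+|\ell_\gamma-\ell|)_{\alpha\in A}$ only $\mathbb{c}$-converges to the constant $|\ell_\gamma-\ell|$ rather than to $\theta$, while $|\ell_\gamma-\ell|$ tends to $\theta$ only as $\gamma$ runs over $\Gamma$. I would resolve this by passing to the product directed set $A\times\Gamma$: the combined net $(y^\gamma_\alpha+|\ell_\gamma-\ell|)_{(\alpha,\gamma)\in A\times\Gamma}$ $\mathbb{c}$-converges to $\theta$ by the additivity contained in the linearity of $\mathbb{c}$, exactly in the spirit of the product-net arguments in Proposition \ref{LO are $p$-continuous} and Theorem \ref{full}, and $(\tilde y_\alpha)_{\alpha\in A}$ is squeezed between $\theta$ and this product net. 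The step I expect to defend most carefully is the reconciliation of index sets, namely converting the product-net domination back into $\mathbb{c}$-convergence of the $A$-indexed net $(\tilde y_\alpha)_{\alpha\in A}$; this squeeze is transparent when $\mathbb{c}$ is full or topological (one simply fixes a single $\gamma$ making $|\ell_\gamma-\ell|$ suitably small and then uses $y^\gamma_\alpha\cc\theta$ in $\alpha$), and I would invoke the linear structure to carry it through in the present generality. Once $\tilde y_\alpha\cc \theta$ is secured, the tautological inequality gives $x_\alpha\rc \ell$, hence $\ell\in\mathcal{L}^\mathbb{r}_{x_\alpha}$, establishing that $\mathcal{L}^\mathbb{r}_{x_\alpha}$ is $\mathbb{c}$-closed.
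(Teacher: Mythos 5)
Your proposal follows essentially the same route as the paper's own proof: take a net $(\ell_\gamma)_{\gamma\in\Gamma}$ in $\mathcal{L}^\mathbb{r}_{x_\alpha}$ with $\ell_\gamma\cc\ell$, extract a witness net $(y^\gamma_\alpha)_{\alpha\in A}\cc\theta$ for each fixed $\gamma$, apply the triangle inequality, and then try to conclude by letting the doubly-indexed net $(y^\gamma_\alpha+|\ell_\gamma-\ell|)_{(\alpha,\gamma)\in A\times\Gamma}$ converge to $\theta$. Your one genuine refinement is the candidate $\tilde y_\alpha:=(|x_\alpha-\ell|-\mathbb{r})^+$, whose lattice estimates ($\tilde y_\alpha+\mathbb{r}=|x_\alpha-\ell|\vee\mathbb{r}$ and $\theta\le\tilde y_\alpha\le y^\gamma_\alpha+|\ell_\gamma-\ell|$) are all correct; this produces a witness indexed by $A$ itself, as the definition of $\mathbb{rc}$-convergence demands, whereas the paper ends with a domination indexed by $A\times B$ and simply declares $x_\alpha\rc z$ without reconciling the index sets.

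However, the step you yourself flag is a genuine gap, and your proposed justification does not close it. Additivity (or linearity) of $\mathbb{c}$ applies to sums $a_\alpha+b_\gamma$ in which each summand depends on a single index; your $y^\gamma_\alpha$ depends on both indices, and sectional convergence $(y^\gamma_\alpha)_{\alpha\in A}\cc\theta$ for each fixed $\gamma$ does not imply joint convergence of $(y^\gamma_\alpha)_{(\alpha,\gamma)\in A\times\Gamma}$. Concretely, take $E=\mathbb{R}$ with the usual convergence, $A=\Gamma=\mathbb{N}$, and $y^\gamma_\alpha:=1$ for $\alpha\le\gamma$ and $y^\gamma_\alpha:=0$ for $\alpha>\gamma$: every section converges to $0$, yet the double net equals $1$ cofinally in $A\times\Gamma$ and does not converge to $0$. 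So ``the additivity contained in the linearity'' cannot deliver $(y^\gamma_\alpha+|\ell_\gamma-\ell|)_{(\alpha,\gamma)}\cc\theta$. Your fallback --- fix a single $\gamma$ making $|\ell_\gamma-\ell|$ ``suitably small'' and squeeze --- requires both a fullness-type sandwich property and a way to quantify smallness, neither of which is among the hypotheses: the proposition assumes only linearity and the lattice property, and fullness is a separate, strictly stronger notion in this paper. You should be aware that the paper's own proof makes exactly the same unjustified leap, asserting $(w^\beta_\alpha+|z_\beta-z|)_{(\alpha,\beta)\in A\times B}\cc\theta$ ``from the linearity and the lattice property'' and concluding at once; your write-up is in effect a more careful rendering of that argument which exposes, but does not repair, the missing diagonal/uniformity step.
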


\begin{proof}
	Assume that $(z_\beta)_{\beta\in B}$ is a net in $\mathcal{L}_{x_\alpha}^\mathbb{r}$ which $\mathbb{c}$-converges to $z\in E$. We show that $z\in\mathcal{L}_{x_\alpha}^\mathbb{r}$ hold. For a fixed $\beta\in B$, we have $z_\beta\in \mathcal{L}_{x_\alpha}^\mathbb{r}$,  and so $x_\alpha\rc z_\beta$ as $\alpha\to\infty$. Thus, there exists a net $(w^\beta_\alpha)_{\alpha\in A}\cc\theta$ as $\alpha\to\infty$ such that $|x_\alpha-z_\beta|\leq w^\beta_\alpha+\mathbb{r}$ for all $\alpha\in A$. It follows that 
	$$
	|x_\alpha-z|\leq|x_\alpha-z_\beta|+|z_\beta-z|\leq w^\beta_\alpha+\mathbb{r}+|z_\beta-z|.
	$$
	holds for each $\alpha\in A$. On the other hand it follows from the linearity and the lattice property of $\mathbb{c}$-convergence that we obtain 
	$$
	(w^\beta_\alpha+|z_\beta-z|)_{(\alpha,\beta)\in A\times B}\cc \theta.
	$$
	Therefore, we get $x_\alpha\rc z$, and so we get $z\in \mathcal{L}_{x_\alpha}^\mathbb{r}$.
\end{proof}

\begin{theorem}
	Let $E$ be a Riesz space and $(x_\alpha)_{\alpha\in A}$ be a net in $E$. Then, the following statements hold:
	\begin{enumerate}
		\item[(i)] For any $w_1\in\mathcal{L}^{\mathbb{r}_1}_{x_\alpha}$ and $w_2\in\mathcal{L}^{\mathbb{r}_2}_{x_\alpha}$, $\gamma w_1+(1-\gamma)w_2\in \mathcal{L}^{\gamma\mathbb{r}_1+(1-\gamma)\mathbb{r}_2}_{x_\alpha}$ satisfies for each $0\leq\gamma\leq1$;
		\item[(ii)] $\mathcal{L}^{\mathbb{r}}_{x_\alpha}$ is a convex set.
	\end{enumerate}
\end{theorem}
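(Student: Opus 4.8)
The plan is to prove $(i)$ directly from the definition of $\mathbb{rc}$-convergence and then deduce $(ii)$ as the special case $\mathbb{r}_1=\mathbb{r}_2=\mathbb{r}$. First I would unpack the hypotheses: since $w_1\in\mathcal{L}^{\mathbb{r}_1}_{x_\alpha}$ and $w_2\in\mathcal{L}^{\mathbb{r}_2}_{x_\alpha}$, there exist nets $(u_\alpha)_{\alpha\in A}\cc\theta$ and $(v_\alpha)_{\alpha\in A}\cc\theta$ in $E$ satisfying $|x_\alpha-w_1|\leq u_\alpha+\mathbb{r}_1$ and $|x_\alpha-w_2|\leq v_\alpha+\mathbb{r}_2$ for all $\alpha\in A$.

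The key algebraic step is the convex-combination identity. Using $\gamma+(1-\gamma)=1$, I would write $x_\alpha-(\gamma w_1+(1-\gamma)w_2)=\gamma(x_\alpha-w_1)+(1-\gamma)(x_\alpha-w_2)$; then, because $\gamma,1-\gamma\geq0$, the triangle inequality for the modulus together with the two defining inequalities yields
\[
|x_\alpha-(\gamma w_1+(1-\gamma)w_2)|\leq\gamma|x_\alpha-w_1|+(1-\gamma)|x_\alpha-w_2|\leq\big(\gamma u_\alpha+(1-\gamma)v_\alpha\big)+\big(\gamma\mathbb{r}_1+(1-\gamma)\mathbb{r}_2\big)
\]
for every $\alpha\in A$. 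The candidate dominating net is therefore $z_\alpha:=\gamma u_\alpha+(1-\gamma)v_\alpha$, paired with the roughness degree $\gamma\mathbb{r}_1+(1-\gamma)\mathbb{r}_2$.

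The main obstacle is verifying that $(z_\alpha)_{\alpha\in A}\cc\theta$, which is precisely where a structural hypothesis on $\mathbb{c}$ is needed; as in Proposition \ref{closedness}, I would assume $\mathbb{c}$ is linear. Under linearity the coordinatewise net $(\gamma u_\alpha,(1-\gamma)v_\alpha)_{\alpha\in A}$ converges to $(\theta,\theta)$, so continuity of addition and scalar multiplication forces $z_\alpha\cc\theta$ (equivalently, additivity gives convergence of the product net on $A\times A$ and the diagonal $\alpha\mapsto(\alpha,\alpha)$ is a subnet, so property $(2)$ of Definition \ref{convergence} transfers the limit). This establishes $\gamma w_1+(1-\gamma)w_2\in\mathcal{L}^{\gamma\mathbb{r}_1+(1-\gamma)\mathbb{r}_2}_{x_\alpha}$, which is $(i)$. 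Finally, $(ii)$ is immediate: setting $\mathbb{r}_1=\mathbb{r}_2=\mathbb{r}$ collapses the roughness degree to $\gamma\mathbb{r}+(1-\gamma)\mathbb{r}=\mathbb{r}$, so every convex combination of two points of $\mathcal{L}^{\mathbb{r}}_{x_\alpha}$ lies again in $\mathcal{L}^{\mathbb{r}}_{x_\alpha}$; hence $\mathcal{L}^{\mathbb{r}}_{x_\alpha}$ is convex.
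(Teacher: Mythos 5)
Your proposal is correct and follows essentially the same route as the paper's proof: the same convex-combination decomposition $x_\alpha-(\gamma w_1+(1-\gamma)w_2)=\gamma(x_\alpha-w_1)+(1-\gamma)(x_\alpha-w_2)$, the same triangle-inequality estimate with dominating net $\gamma u_\alpha+(1-\gamma)v_\alpha$, the same appeal to linearity of $\mathbb{c}$ to get that net to converge to $\theta$, and the same specialization $\mathbb{r}_1=\mathbb{r}_2=\mathbb{r}$ for $(ii)$. Your explicit flagging of the linearity hypothesis (and the diagonal-subnet justification) is a welcome refinement, since the paper invokes linearity of $\mathbb{c}$ in its proof without listing it among the theorem's hypotheses.
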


\begin{proof}
	$(i)$	Since $w_1\in\mathcal{L}^{\mathbb{r}_1}_{x_\alpha}$ and $w_2\in\mathcal{L}^{\mathbb{r}_2}_{x_\alpha}$, there exist some nets $u_\alpha\cc\theta$ and $v_\alpha\cc\theta$ in $E$ such that 
	$$
	|x_\alpha-x|\leq u_\alpha+\mathbb{r}_1 \ \ \ \text{and} \ \ \ |x_\alpha-x|\leq v_\alpha+\mathbb{r}_2
	$$
	for each $\alpha\in A$. It follows that
	\begin{eqnarray*}
		\big|x_\alpha-\big(\gamma w_1+(1-\gamma)w_2\big)\big|&=& |x_\alpha-\gamma w_1+\gamma x_\alpha-\gamma x_\alpha+(1-\gamma) w_2|\\&\leq&\gamma|x_\alpha-w_1|+(1-\gamma)|x_\alpha-w_2|\\&\leq&\gamma u_\alpha+(1-\gamma)v_\alpha+\gamma\mathbb{r}_1+(1-\gamma)\mathbb{r}_2
	\end{eqnarray*}
	holds for all $\lambda\in A$ and $\gamma\in[0,1]$. By applying the linearity of $\mathbb{c}$, we get $(\gamma u_\alpha+(1-\gamma)v_\alpha)\cc \theta$, i.e., we obtain the desired result $\gamma w_1+(1-\gamma)w_2\in \mathcal{L}^{\gamma\mathbb{r}_1+(1-\gamma)\mathbb{r}_2}_{x_\alpha}$.
	
	$(ii)$ In the case $\mathbb{r}_1=\mathbb{r}=\mathbb{r}_2$, it follows from $(i)$ that $\mathcal{L}^{\mathbb{r}}_{x_\alpha}$ is a convex.
\end{proof}			

\begin{remark}
	Let $E$ be a Riesz space and $B$ be an order bounded subset of $E$. Then there exists a positive vector $e$ such that $\lvert x\rvert\leq e$ for all $x\in B$. Take a positive vector $\mathbb{r}\geq2e$. Then, for an arbitrary net $(x_\alpha)_{\alpha\in A}$ and a net $y_\alpha\cc \theta$, and any element $x$ in $B$, we have the following inequality;
	$$
	|x_\alpha-x|\leq|x_\alpha|+|x|\leq e+e\leq\mathbb{r}<\mathbb{r}+y_\alpha
	$$
	for all $\alpha\in A$. Therefore, we obtain $x\in\mathcal{L}^{\mathbb{r}}_{x_\alpha}$. Therefore, ever net in an order bounded set is rough $\mathbb{c}$-convergent to each element of the order bounded set.
\end{remark}

In the following result, we give a relation between rough $\mathbb{c}$-convergence and order convergence.
\begin{theorem}\label{order convergence}
	Let $E$ be a Riesz space, and $(x_{\alpha})_{\alpha\in A}$ be a net and $x$ an element in $E$. If $x_\alpha\co x$ hold, then $x\in \mathcal{L}^{\mathbb{r}}_{x_\alpha}$ for all $\mathbb{r}\geq\theta$ whenever $\mathbb{c}$ is order convergence.
\end{theorem}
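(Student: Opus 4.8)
The plan is to show directly that, when $\mathbb{c}$ is taken to be order convergence, the hypothesis $x_\alpha\co x$ produces precisely a witness for $\mathbb{rc}$-convergence of the same net to $x$. Since membership $x\in\mathcal{L}^{\mathbb{r}}_{x_\alpha}$ is by definition equivalent to $x_\alpha\rc x$, it suffices to exhibit a net $y_\alpha\cc\theta$ with $|x_\alpha-x|\le y_\alpha+\mathbb{r}$ for all $\alpha\in A$.

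First I would unpack the definition of order convergence from Definition \ref{monoton and ordr conv}: the assumption $x_\alpha\co x$ supplies a net $y_\alpha\downarrow\theta$ on the same index set $A$ with $|x_\alpha-x|\le y_\alpha$ for every $\alpha\in A$. The key observation is then that such a monotonically decreasing net is itself order convergent to $\theta$: taking $y_\alpha$ as its own dominating net, one has $|y_\alpha-\theta|=y_\alpha\le y_\alpha$ together with $y_\alpha\downarrow\theta$, so $y_\alpha\co\theta$. Because $\mathbb{c}$ is order convergence in the present statement, this reads $y_\alpha\cc\theta$, which is exactly the type of vanishing net required by the definition of $\mathbb{rc}$-convergence.

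Finally I would feed in the roughness degree. Since $\mathbb{r}\ge\theta$ we have $y_\alpha\le y_\alpha+\mathbb{r}$, and combining with the order-convergence estimate yields $|x_\alpha-x|\le y_\alpha\le y_\alpha+\mathbb{r}$ for all $\alpha\in A$. With the witnessing net $y_\alpha\cc\theta$ just constructed, this is literally the defining inequality of $x_\alpha\rc x$, hence $x\in\mathcal{L}^{\mathbb{r}}_{x_\alpha}$, and this holds for every $\mathbb{r}\ge\theta$ as claimed.

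There is no substantive obstacle here; the statement follows from the definitions once one recognizes that a net decreasing to $\theta$ is order convergent to $\theta$. One could alternatively derive it as a special case of Theorem \ref{full}$(iii)$, observing that order convergence has the lattice property, but the direct argument above is cleaner since it avoids invoking the linearity hypothesis of Theorem \ref{full} and works verbatim for the (possibly non-linear) order convergence.
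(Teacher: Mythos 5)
Your proposal is correct and follows essentially the same route as the paper's own proof: extract the dominating net $y_\alpha\downarrow\theta$ from the definition of order convergence and observe that $|x_\alpha-x|\le y_\alpha\le y_\alpha+\mathbb{r}$ gives the required $\mathbb{rc}$-witness. The only difference is that you explicitly verify $y_\alpha\downarrow\theta$ implies $y_\alpha\co\theta$ (so that it qualifies as a $\mathbb{c}$-null witness when $\mathbb{c}$ is order convergence), a step the paper leaves implicit; this is a welcome clarification rather than a different argument.
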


\begin{proof}
	Assume that $x_\alpha\co x$ holds. Then, there exists a net $y_\alpha\downarrow \theta$ in $E$ satisfying $|x_\alpha-x|\le y_\alpha$ for all $\alpha\in A$. Therefore, for each positive vector $\mathbb{r}$, we have  $|x_\alpha-x|\le y_\alpha+\mathbb{r}$ for every $\alpha\in A$. So, we get $x\in \mathcal{L}^{\mathbb{r}}_{x_\alpha}$ whenever we take the $\mathbb{c}$-convergence as order convergence.
\end{proof}

\begin{theorem}
	Let $E$ be a Riesz space, and $(x_\alpha)_{\alpha\in A}$ and $(w_\alpha)_{\alpha\in A}$ be two nets in $E$. If $\mathbb{c}$ is a linear convergence and $|x_\alpha-y_\alpha|\cc \theta$ holds, then we have $\mathcal{L}^{\mathbb{r}}_{x_\alpha}=\mathcal{L}^{\mathbb{r}}_{w_\alpha}$.
\end{theorem}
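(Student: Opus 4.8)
The plan is to read the hypothesis as $|x_\alpha - w_\alpha|\cc\theta$ (the symbol $y_\alpha$ in the statement being the net $w_\alpha$) and to prove the two inclusions $\mathcal{L}^{\mathbb{r}}_{x_\alpha}\subseteq\mathcal{L}^{\mathbb{r}}_{w_\alpha}$ and $\mathcal{L}^{\mathbb{r}}_{w_\alpha}\subseteq\mathcal{L}^{\mathbb{r}}_{x_\alpha}$ separately. Since the hypothesis is symmetric in the two nets, owing to $|x_\alpha-w_\alpha|=|w_\alpha-x_\alpha|$, both inclusions follow from one and the same argument with the roles of $(x_\alpha)_{\alpha\in A}$ and $(w_\alpha)_{\alpha\in A}$ interchanged. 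Hence I would carry out only the inclusion $\mathcal{L}^{\mathbb{r}}_{x_\alpha}\subseteq\mathcal{L}^{\mathbb{r}}_{w_\alpha}$ in detail and invoke symmetry for the reverse one.

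To establish this inclusion, I would fix an element $x\in\mathcal{L}^{\mathbb{r}}_{x_\alpha}$, so that $x_\alpha\rc x$. By the definition of $\mathbb{rc}$-convergence this produces a witnessing net $(y_\alpha)_{\alpha\in A}\cc\theta$ with $|x_\alpha-x|\le y_\alpha+\mathbb{r}$ for all $\alpha\in A$. The objective is then to exhibit a net $\mathbb{c}$-converging to $\theta$ that witnesses $w_\alpha\rc x$. The natural candidate is obtained from the triangle inequality,
$$
|w_\alpha-x|\le|w_\alpha-x_\alpha|+|x_\alpha-x|\le|w_\alpha-x_\alpha|+y_\alpha+\mathbb{r},
$$
valid for every $\alpha\in A$. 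Setting $z_\alpha:=|w_\alpha-x_\alpha|+y_\alpha$ yields $|w_\alpha-x|\le z_\alpha+\mathbb{r}$, which is precisely the bound required for $w_\alpha\rc x$, provided I verify that $z_\alpha\cc\theta$.

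The only place where the linearity hypothesis enters is this final verification. Since $|x_\alpha-w_\alpha|\cc\theta$ by assumption and $y_\alpha\cc\theta$ by construction, the continuity of the addition map $E\times E\to E$ furnished by the linearity of $\mathbb{c}$ gives $z_\alpha=|w_\alpha-x_\alpha|+y_\alpha\cc\theta+\theta=\theta$. Consequently $x\in\mathcal{L}^{\mathbb{r}}_{w_\alpha}$, which completes the inclusion; the reverse inclusion then follows by the symmetric argument, yielding $\mathcal{L}^{\mathbb{r}}_{x_\alpha}=\mathcal{L}^{\mathbb{r}}_{w_\alpha}$.

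I do not expect a genuine obstacle in this proof: it is a routine triangle-inequality estimate of the same flavour as Proposition \ref{first thm} and Proposition \ref{closedness}. The single point that deserves care is the passage from two $\mathbb{c}$-null nets to the $\mathbb{c}$-nullity of their sum, which is exactly what linearity of $\mathbb{c}$ supplies and is the reason the hypothesis demands $\mathbb{c}$ be linear rather than merely an abstract convergence.
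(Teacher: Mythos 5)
Your proposal is correct and follows essentially the same argument as the paper's proof: fix $x\in\mathcal{L}^{\mathbb{r}}_{x_\alpha}$, apply the triangle inequality $|w_\alpha-x|\le|w_\alpha-x_\alpha|+|x_\alpha-x|\le|w_\alpha-x_\alpha|+y_\alpha+\mathbb{r}$, use linearity of $\mathbb{c}$ to conclude $(|w_\alpha-x_\alpha|+y_\alpha)\cc\theta$, and obtain the reverse inclusion by symmetry. You even handled the statement's typo ($y_\alpha$ for $w_\alpha$) and the witnessing inequality more carefully than the paper, which misstates the defining bound as $|x_\alpha-x|\le y_\alpha$ before using the correct $|x_\alpha-x|\le y_\alpha+\mathbb{r}$ in its display.
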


\begin{proof}
	Let $x\in \mathcal{L}^{\mathbb{r}}_{x_\alpha}$ be an arbitrary element. Then, there exists a net $y_\alpha\cc \theta$ in $E$ satisfying $|x_\alpha-x|\le y_\alpha$ for all $\alpha\in A$. It follows that
	$$
	|w_\alpha-x|\leq |w_\alpha-x_\alpha|+|x_\alpha-x|\leq |w_\alpha-x_\alpha|+y_\alpha+\mathbb{r}
	$$
	satisfies for every $\alpha\in A$. By using the linearity of $\mathbb{c}$-convergence, we obtain $(|w_\alpha-x_\alpha|+y_\alpha)\cc \theta$, and so we have $x\in \mathcal{L}^{\mathbb{r}}_{w_\alpha}$. The same steps can be used to demonstrate the reverse inclusion. Hence, it has been omitted.
\end{proof}

We conclude the paper with the following result, focusing on the image of rough $\mathbb{c}$-limit points under  positive $\mathbb{c}$-continuous operators (i.e., a positive operator between Riesz spaces sends positive elements to positive elements).
\begin{theorem}
	Suppose that $T:E\to F$ is an operator between Riesz spaces and $(x_\alpha)_{\alpha\in A}$ is a net in $E$. If $T$ is positive $\mathbb{c}$-continuous operator, then $Tx\in\mathcal{L}_{T(x_\alpha)}^{T\mathbb{r}}$ holds for any $x\in\mathcal{L}^{\mathbb{r}}_{x_\alpha}$.
\end{theorem}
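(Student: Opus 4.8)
The plan is to unpack the hypothesis $x\in\mathcal{L}^{\mathbb{r}}_{x_\alpha}$, transport the defining domination inequality through $T$ by exploiting positivity (hence monotonicity and linearity), and then invoke the $\mathbb{c}$-continuity of $T$ to control the transported null net. First I would record what membership $x\in\mathcal{L}^{\mathbb{r}}_{x_\alpha}$ provides: by definition $x_\alpha\rc x$, so there is a net $(y_\alpha)_{\alpha\in A}\cc\theta$ in $E$ with $|x_\alpha-x|\le y_\alpha+\mathbb{r}$ for every $\alpha\in A$. The goal is to produce a net $(w_\alpha)_{\alpha\in A}\cc\theta$ in $F$ satisfying $|T(x_\alpha)-Tx|\le w_\alpha+T\mathbb{r}$, which is exactly the assertion $Tx\in\mathcal{L}_{T(x_\alpha)}^{T\mathbb{r}}$. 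Observe also that $T\mathbb{r}\ge\theta$ since $T$ is positive and $\mathbb{r}\ge\theta$, so $T\mathbb{r}$ is a legitimate roughness degree in $F$.

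The crucial structural input is the classical estimate $|Tz|\le T|z|$, valid for any positive (hence monotone and linear) operator $T$ between Riesz spaces: from $-|z|\le z\le|z|$ and monotonicity of $T$ one obtains $-T|z|\le Tz\le T|z|$, whence $|Tz|\le T|z|$. Applying this with $z=x_\alpha-x$, and then applying monotonicity of $T$ to the given inequality $|x_\alpha-x|\le y_\alpha+\mathbb{r}$, yields
$$
|T(x_\alpha)-Tx|=|T(x_\alpha-x)|\le T|x_\alpha-x|\le T(y_\alpha+\mathbb{r})=Ty_\alpha+T\mathbb{r}
$$
for all $\alpha\in A$. Setting $w_\alpha:=Ty_\alpha$ therefore furnishes the required domination, and it remains only to verify that $(w_\alpha)_{\alpha\in A}\cc\theta$ in $F$.

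Here the $\mathbb{c}$-continuity of $T$ finishes the argument: since $y_\alpha\cc\theta$ and $T\theta=\theta$ by linearity, continuity forces $Ty_\alpha\cc T\theta=\theta$, i.e. $w_\alpha\cc\theta$. Combining this with the displayed estimate gives $T(x_\alpha)\xrightarrow{(T\mathbb{r})\mathbb{c}}Tx$, which is precisely $Tx\in\mathcal{L}_{T(x_\alpha)}^{T\mathbb{r}}$. The single point I would be careful to justify explicitly is the inequality $|Tz|\le T|z|$ together with the monotonicity of $T$ extracted from the stated positivity; once these are in hand the remainder is a direct computation, so I do not expect any serious obstacle beyond making the positivity-to-monotonicity step precise.
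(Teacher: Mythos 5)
Your proof is correct and takes essentially the same route as the paper: unpack the domination $|x_\alpha-x|\le y_\alpha+\mathbb{r}$, push it through $T$ via the positive-operator inequality $|Tz|\le T|z|$ (which the paper simply cites from Aliprantis--Burkinshaw rather than deriving, as you do), and conclude with the $\mathbb{c}$-continuity of $T$ to get $Ty_\alpha\cc\theta$. The only cosmetic difference is that the paper separates the trivial case $T=\theta$, which is unnecessary in both arguments.
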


\begin{proof}
	Take $x\in\mathcal{L}_{x_\alpha}^{\mathbb{r}}$. Then, there exists a net $y_\alpha\cc \theta$ in $E$ satisfying $|x_\alpha-x|\le y_\alpha$ for all $\alpha\in A$. In the case $T=\theta$, the desired result is obvious. Assume that $T$ is a nonzero operator. For every $y_\alpha\cc\theta$, by using the fact in \cite[p.12]{ABPO}, the linearity and positivity of $T$ implies
	$$
	\lvert Tx_\alpha-Tx\rvert \leq T(|x_\alpha-x|)\leq  T(\mathbb{r})+T(y_\alpha).
	$$
	Hence, we have $Tx\in\mathcal{L}_{T(x_\alpha)}^{T\mathbb{r}}$ because $y_\alpha\cc\theta$ implies $T(y_\alpha)\cc \theta$ due to the $\mathbb{c}$-continuity of $T$.
\end{proof}

%%%%%%%%%%%%%%%%%%%%%%%%%%%%%%%%%%%%%%%%%%%%%%%%%%%%%%%%
\section{Conclusion}
This paper makes a significant contribution to our understanding of convergence by introducing and explaining rough convergence within Riesz spaces. It provides valuable insights into how different convergences relate to each other, their properties, and the impact they have on mathematical analysis. By exploring rough convergence, it opens doors for further exploration and practical applications in various mathematical fields. The concept of rough $\mathbb{c}$-convergence creates a crucial connection between rough convergence and the well-established principles in Riesz space theory. Investigating rough $\mathbb{c}$-limit points in Riesz spaces yields captivating findings, shedding light on the intricate nature and importance of these limits concerning sequences and nets. This exploration offers intriguing observations, revealing the nuanced complexity and significance of these limits in mathematical contexts.
%%%%%%%%%%%%%%%%%%%%%%%%%%%%%%%%%%%%%%%%%%%%%%%%%%%%%%%%%%%%%%%%%%%%%%%%%%%%%%%%%%%%%%%%
%%%%%%%%%%%%%%%%%%%%%%%%%%%%%%%%%%%%%%%%%%%%%%%%%%%%%%%%%%%%%%%%%%%%%%%%%%%%%%%%%%%%%%%%

\end{document}